\def \C{\mathbb{C}}
\newcommand{\norm}[1]{\left\lVert#1\right\rVert}
\newcommand{\normp}[1]{\lvert\lvert\lvert#1\rvert\rvert\rvert}
\newcommand\scalemath[2]{\scalebox{#1}{\mbox{\ensuremath{\displaystyle #2}}}}
\newcommand{\rotanti}[1]{\rotatebox{90}{#1}}
\newcommand\inner[2]{\langle #1, #2 \rangle}
\newtheorem{theorem}{Theorem}[section]
\newtheorem{lemma}[theorem]{Lemma}
\newtheorem{remark}[theorem]{Remark}
\newtheorem{definition}[theorem]{Definition}
\newtheorem{corollary}[theorem]{Corollary}
\newtheorem{example}[theorem]{Example}
\numberwithin{equation}{section}
\numberwithin{table}{section}
\numberwithin{figure}{section}
\author{Biswajit Das\thanks{%
		Department of Mathematics,
		Indian Institute of Technology Guwahati,
		Guwahati 781039, Assam, India,
		(email:{biswajit.das@iitg.ac.in, shbora@iitg.ac.in}). The work of the first author is supported by the MHRD, Government of India.}
	~and
	Shreemayee Bora\footnotemark[1]}
\title{Nearest matrix polynomials with a specified elementary divisor.}
\date{}
\begin{document}
\maketitle

\begin{abstract}
	The problem of finding the distance from a given $n \times n$ matrix polynomial of degree $k$  to the set of matrix polynomials having the elementary divisor $(\lambda-\lambda_0)^j, \, j \geqslant r,$ for a fixed scalar $\lambda_0$ and $2 \leqslant r \leqslant kn$ is considered. It is established that polynomials that are not regular are arbitrarily close to a regular matrix polynomial with the desired elementary divisor. For regular matrix polynomials the problem is shown to be equivalent to finding minimal structure preserving perturbations such that a certain block Toeplitz matrix becomes suitably rank deficient. This is then used to characterize the distance via two different optimizations. The first one shows that if $\lambda_0$ is not already an eigenvalue of the matrix polynomial, then the problem is equivalent to computing a generalized notion of a structured singular value. The distance is computed via algorithms like BFGS and Matlab's globalsearch algorithm from the second optimization. Upper and lower bounds of the distance are also derived and numerical experiments are performed to compare them with the computed values of the distance.
\end{abstract}

{\bf{Key words:}} matrix polynomial, elementary divisor, Jordan chain, Toeplitz matrix.\\

{\bf{AMS subject classifications:}} 15A18, 65F35, 65F15, 47A56, 15B05, 47J10.

\section{Introduction}

Given an matrix polynomial $P(\lambda) = \sum_{i=0}^k\lambda^iA_i$ of degree $k$ where $A_i, \, i = 0, \ldots, k$ are $n \times n$ real or complex matrices, this paper investigates the distance from $P(\lambda)$ to a nearest matrix polynomial with an elementary divisor $(\lambda - \lambda_0)^j, j \geqslant r,$ for a given $\lambda_0 \in \C$ and integer $r \geqslant 2.$ 

Although the problem is considered only for finite values of $\lambda_0,$ the analysis also covers the infinite case which is equivalent to the reversal polynomial defined by $\mathrm{rev}\, P(\lambda) := \sum_{i=0}^k \lambda^iA_{k-i}$ having an elementary divisor $\lambda^j, j \geqslant r.$ In particular in such cases the distance under consideration is important from the point of view of control theory for the following reasons. If $P(\lambda) = \lambda A_1 - A_0$ is a regular matrix pencil then there exist invertible matrices $E$ and $F$ such that $$EP(\lambda)F = \lambda\left[\begin{matrix} I_{p_1} & 0 \\ 0 & N_{p_2} \end{matrix}\right] + \left[\begin{matrix} J_{p_1} & 0 \\ 0 & I_{p_2} \end{matrix}\right], p_1 + p_2 = n,$$ where $J_{p_1}$ is a block diagonal matrix containing all the Jordan blocks associated with finite eigenvalues of $P(\lambda)$ and $N_{p_2}$ is a nilpotent matrix of size $p_2$ with nilpotent Jordan blocks on the diagonal. The block $N_{p_2}$ arises in the decomposition only if $P(\lambda)$ has an eigenvalue at $\infty.$ The matrix pencil on the right hand side of the above decomposition is called the Weierstrass canonical form of the pencil $P(\lambda).$ If $\infty$ is an eigenvalue, then the smallest positive integer $\nu$ such that $N_{p_2}^\nu = 0$ is called the index of the pencil. If $\nu > 1,$ then this is equivalent to the existence of a Jordan chain of length at least $2$ at $\infty$ for $P(\lambda),$  or equivalently an elementary divisor $\lambda^j, j \geqslant 2,$ for $\mathrm{rev}\, P(\lambda).$ In such a case the associated differential algebraic equation $A_1\dot{x}(t) = A_0x(t) + Bu(t),$ may not have any solution for certain choices of initial conditions unless the controller $u(t)$ is sufficiently smooth. In fact the larger the length of a Jordan chain at $\infty,$ the greater are the smoothness requirements on $u(t).$ In particular, for dynamical systems arising from matrix pencils as above to be stable or asymptotically stable, it is necessary that the matrix pencil has index at most one. Moreover, for the stability of such systems it is necessary that the purely imaginary eigenvalues of $P(\lambda)$ are not associated with Jordan chains of length $2$ or more. For more details see~\cite{ByeN93, Var95, DuLM13} and references therein.

It is well known that arbitrarily small perturbations to matrix pencils with $\lambda_0$ as an eigenvalue of algebraic multiplicity $r$ can result in a matrix pencil having an elementary divisor $(\lambda - \lambda_0)^r.$ In fact this result can also be extended to all matrix polynomials a proof of which is provided in \cref{zerocase}. Due to this fact, the distance problem under consideration is also equivalent to finding the distance to a nearest matrix polynomial with an eigenvalue at $\lambda_0$ with algebraic multiplicity at least $r.$ This problem has been considered in the literature in various forms. The distance to a nearest matrix polynomial with a prescribed multiple eigenvalue is considered in~\cite{PapP08} and bounds on the distance are obtained under certain conditions. In~\cite{Psa12} this work is extended to find the distance from a given matrix polynomial to a nearest matrix polynomial with a specified eigenvalue of algebraic multiplicity at least $r$ and a Jordan chain of length at most $k$ and an upper bound of the distance to a nearest matrix polynomial with a specified eigenvalue of algebraic multiplicity at least $r.$ The latter is done by constructing a perturbation to the given matrix polynomial which has the desired feature. However, the construction is possible under certain conditions. The results are extended to matrix polynomials in~\cite{KokPL18} where a similar construction is made  to find an upper bound on the distance to a nearest matrix polynomial with specified eigenvalues of desired multiplicities. The distance from an $n \times m$ matrix pencil $A + \lambda B$ with $n \geqslant m,$ to a nearest matrix pencil having specified eigenvalues such that the sum of their multiplicities is at least $r$ is considered in~\cite{KreMNT14}. Under the assumption that $\mathrm{rank} \, B \geqslant r,$ and only $A$ is perturbed, the distance is shown to be given by a certain singular value optimization under certain conditions. These ideas are extended in~\cite{KarM15} to find the same distance from a square matrix polynomial that has no infinite eigenvalues. Under certain conditions similar to those in~\cite{KreMNT14}, a singular value optimization is shown to be equal to the distance when only the constant coefficient of the matrix polynomial is perturbed. A lower bound is found for the general case when all coefficient matrices are perturbed. The techniques are further extended to find the same distance for more general nonlinear eigenvalue problems in~\cite{KarKM14}.

The analysis of the distance problem in this paper has several key features. Firstly the stated distance is considered for a square matrix polynomial that is either regular or singular and perturbations are considered on all the coefficient matrices of the polynomial. Note that with the exception of~\cite{KreMNT14} where a rectangular matrix pencil is considered, in all other works in the literature the matrix pencil or polynomial is assumed to be regular. However, \cite{KreMNT14} considers perturbations only to the constant coefficient matrix of the pencil. In fact by using elementary perturbation theoretic arguments it is shown in \cref{zerocase} that if the matrix polynomial $P(\lambda)$ is singular, then it is arbitrarily close to a regular matrix polynomial with an elementary divisor $(\lambda - \lambda_0)^j, j \geqslant r.$ This makes it possible to assume that the matrix polynomial $P(\lambda)$ is regular in the rest of the paper. A necessary and sufficient condition is obtained for $P(\lambda)$ to have $\lambda_0$ as an eigenvalue of algebraic multiplicity at least $r.$ Due to this it is possible to show that finding the stated distance is equivalent to finding a structure preserving perturbation such that the nullity of a certain block Toeplitz matrix is at least $r.$ This leads to a lower bound on the distance and allows for several characterizations of the distance in terms of optimization problems. Under the mild assumption that $\lambda_0$ is not an eigenvalue of $P(\lambda),$ for different choices of norms it is established that computing the distance from $P(\lambda)$ to a nearest matrix polynomial with an elementary divisor $(\lambda - \lambda_0)^j, j \geqslant r,$ is equivalent to computing a generalized version of a structured singular value or $\mu$-value. It is well known that the $\mu$-value computation is an NP-hard problem~\cite{BraYDM94}. Due to the form of the generalized $\mu$-value, these results are likely to throw light on the computational complexity of the distance problem. The characterization in terms of generalized $\mu$-values also yields a lower bound on the distance. Alternatively, the distance is characterized by another optimization problem which is computed via BFGS and Matlab's {\tt globalsearch} algorithm. This also results in an upper bound on the distance. A special case for which the solution of the distance problem has a closed form expression is also discussed. Finally, computed values of the distance via BFGS and Matlab's globalsearch are compared with upper and lower bounds.

\section{Preliminaries}\label{basic}

Standard notations are followed throughout the paper. The set of $n \times n$ complex matrices is denoted by $\mathbb{C}^{n \times n}.$ The $i$-th singular value of a matrix $A$ is denoted by $\sigma_i(A).$ Also the smallest singular value of $A$ is denoted by $\sigma_{\min}(A).$

Consider the matrix polynomial of degree $k$ of the form $P(\lambda)=\sum_{i=0}^k \lambda^i A_i$, $A_i\in \mathbb{C}^{n\times n}$.
There exist two $n \times n$ matrix polynomials $E(\lambda)$ and $F(\lambda)$ with nonzero determinants independent of $\lambda$ such that $P(\lambda)=E(\lambda)D(\lambda)F(\lambda)$, where $$D(\lambda)=\begin{bmatrix}d_1(\lambda)&&&&&\\&\ddots&&&&\\&&d_t(\lambda)&&&\\&&&0&&\\&&&&\ddots&\\&&&&&0\end{bmatrix}$$ is a diagonal matrix  with monic scalar polynomials $d_i(\lambda)$ such that $d_{i-1}(\lambda)$ divides $d_i(\lambda)$. This is called the Smith form of $P(\lambda)$. Important concepts associated with $P(\lambda)$ may be defined via its Smith form. The nonzero diagonal elements $d_1(\lambda),\dots,d_t(\lambda)$ are called the invariant polynomials of $P(\lambda)$. The number of such invariant polynomials is the normal rank of $P(\lambda)$. The polynomial $P(\lambda)$ is said to be regular if its normal rank is equal to its size $n.$ Else it is said to be a non-regular or singular matrix polynomial.

Each invariant polynomial may be written as a product of linear factors $$d_i(\lambda)=(\lambda-\lambda_{i1})^{c_{i1}}\dots(\lambda-\lambda_iq_i)^{c_{iq_i}}$$ where $\lambda_{i1},\dots,\lambda_{iq_i}$ are distinct complex numbers and $c_{i1},\dots,c_{iq_i}$ are positive integers. The factors $(\lambda-\lambda_{ij})^{c_{ij}}$  are called elementary divisors of $P(\lambda)$. Any $\lambda_0\in \mathbb{C}$ is a finite eigenvalue of $P(\lambda)$ if $(\lambda-\lambda_0)^c$ is a elementary divisor of $P(\lambda)$ for some positive integer $c$. The algebraic multiplicity of $\lambda_0$ as an eigenvalue of $P(\lambda)$ is the sum of all the powers of the term $(\lambda-\lambda_0)$ in all the invariant polynomials and the geometric multiplicity of $\lambda_0$ as an eigenvalue is the number of invariant polynomials which have $(\lambda-\lambda_0)^c$ as a factor.  Clearly, if the matrix polynomial $P(\lambda)$ is regular, then the eigenvalues of $P(\lambda)$ are the roots of $\det(P(\lambda))$ with algebraic multiplicity equal to the multiplicity of the root.

Having an elementary divisor $(\lambda - \lambda_0)^r$ is also equivalent to the existence of vectors $x_0,\dots,x_{r-1}\in\mathbb{C}^n,$ $x_0 \neq 0,$ satisfying the equations $$\sum_{i=0}^p\frac{1}{i!}P^i(\lambda_0)x_{p-i}=0, p=0,\dots,r-1$$ where $P^i(\lambda)$ denotes the $i$-th derivative of $P(\lambda)$ with respect to $\lambda$. The vectors $x_0, \ldots, x_{r-1}$ are said to form a Jordan chain of length $r$ of $P(\lambda)$ corresponding to $\lambda_0.$

Given any $n \times n$ matrix pencil $L(\lambda)=A-\lambda E$  there exist two $n\times n$ invertible matrices $P$ and $Q$ such that $P(A-\lambda E)Q$ is a block diagonal matrix pencil, the diagonal blocks being either $q \times q$ blocks of the form $\lambda I_q-J_q(\alpha)$ or $\lambda J_q(0)-I_q$, or $q \times (q+1)$ blocks of the form $\lambda G_q- F_q,$ or their transposes $\lambda G_q^T- F_q^T,$ where $$J_q(\alpha)=\begin{bmatrix}\alpha&1&&\\&\alpha&\ddots&\\&&\ddots&1\\&&&\alpha\end{bmatrix}, F_q=\begin{bmatrix}1&0&&\\&\ddots&\ddots&\\&&1&0\end{bmatrix}, G_q=\begin{bmatrix}0&1&&\\&\ddots&\ddots&\\&&0&1\end{bmatrix}$$ for some $\alpha\in \mathbb{C}$. The blocks $\lambda I_q-J_q(\alpha)$, $\lambda J_q(0)-I_q$, $\lambda G_q-F_q$ and $\lambda G_q^T-F_q^T$ correspond to a finite eigenvalue $\alpha$, the infinite eigenvalue, right singular blocks and left singular blocks respectively. This is called the Kronecker canonical form (KCF) of the pencil. For a matrix pencil, having a elementary divisor $(\lambda-\lambda_0)^r$ is equivalent to having a block $\lambda I_r-J_r(\lambda_0)$ in its KCF. Also clearly the algebraic multiplicity of $\lambda_0$ as an eigenvalue of $L(\lambda)$ is the sum of all the sizes of the blocks corresponding to $\lambda_0$ and its geometric multiplicity is the number of such blocks.

The normwise distance of $P(\lambda)$ to the set of all matrix polynomials having a elementary divisor $(\lambda - \lambda_0)^j$ where $j\geqslant r$
will be considered with respect to the following norms.
\begin{align*}
\scalemath{0.9}\delta_F(P,\lambda_0,r)=\inf\left\{\normp{\Delta P}_F|P+\Delta P\text{ has an elementary divisor } (\lambda-\lambda_0)^j, j\geqslant r \right\},\\
\delta_2(P,\lambda_0,r)=\inf\left\{\normp{\Delta P}_2|P+\Delta P\text{ has an elementary divisor } (\lambda-\lambda_0)^j, j\geqslant r \right\},
\end{align*}
where  $\normp{P}_F := \left(\sum_{i=0}^k\| A_i \|_F^2\right)^{1/2}$ and $\normp{P}_2 := \|[A_0 \cdots A_k]\|_2$,
$\| \cdot \|_F$ and $\| \cdot \|_2$ being the Frobenius and $2$-norms on matrices respectively. Also the matrix polynomial $\Delta P(\lambda)=\sum_{i=0}^k \lambda^i \Delta A_i$ is such that any of the coefficient matrix $\Delta A_i$ may be zero.

Due to the importance of the case $\lambda_0 = 0$ in practical applications and also because the results for this case involve expressions that are relatively simpler than the general case, in many instances initially the results for this special case are obtained and then extend for other choices of $\lambda_0.$ The following lemma will be useful for making these extensions.

\begin{lemma}\label{z_to_nz} Given any $n \times n$ matrix polynomial $Q(\lambda) = \sum_{i = 0}^k \lambda^iB_i,$ and $\lambda_0 \in \C,$
	$$\left[\begin{matrix} Q(\lambda_0) & Q'(\lambda_0) & \cdots & \frac{1}{p!}Q^p(\lambda_0) \end{matrix}\right] = \left[\begin{matrix} B_0 & \cdots & B_k
	\end{matrix}\right] M(\lambda_0;r)$$
	$M(\lambda_0;r)$ being a $(k+1)n \times (p+1)n$ matrix with $p =\min\{r,k\}$, given by
	$M(\lambda_0;r) = H(\lambda_0) \otimes I_n$ where $H(\lambda_0)$ is a $(k+1) \times (p+1)$ matrix with $(i,j)$ entry equal to $\left.\frac{1}{(j-1)!}\frac{d^{j-1}\lambda^{i-1}}{d\lambda^{j-1}}\right|_{\lambda=\lambda_0}.$
\end{lemma}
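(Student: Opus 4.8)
The plan is to prove the identity by a direct block-by-block comparison of the two sides, using the Kronecker product structure of $M(\lambda_0;r)$. Partition the left-hand side into $p+1$ blocks of size $n \times n$, the $j$-th of which is $\frac{1}{(j-1)!}Q^{(j-1)}(\lambda_0)$ for $j = 1,\dots,p+1$ (with the convention $Q^{(0)} = Q$), and partition $\left[\begin{matrix} B_0 & \cdots & B_k \end{matrix}\right]$ into $k+1$ column blocks, the $i$-th being $B_{i-1}$. Since $M(\lambda_0;r) = H(\lambda_0)\otimes I_n$, its $(i,j)$ block is $h_{ij}I_n$, where $h_{ij}$ denotes the $(i,j)$ entry of $H(\lambda_0)$; hence the $j$-th block of the product $\left[\begin{matrix} B_0 & \cdots & B_k \end{matrix}\right]M(\lambda_0;r)$ is $\sum_{i=1}^{k+1} h_{ij}B_{i-1}$.

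Next I would compute the left-hand side explicitly. Differentiating $Q(\lambda) = \sum_{i=0}^k \lambda^i B_i$ termwise $(j-1)$ times and dividing by $(j-1)!$ gives
$$\frac{1}{(j-1)!}Q^{(j-1)}(\lambda_0) = \sum_{i=0}^{k}\left.\frac{1}{(j-1)!}\frac{d^{j-1}\lambda^{i}}{d\lambda^{j-1}}\right|_{\lambda=\lambda_0} B_i = \sum_{i=1}^{k+1}\left.\frac{1}{(j-1)!}\frac{d^{j-1}\lambda^{i-1}}{d\lambda^{j-1}}\right|_{\lambda=\lambda_0} B_{i-1},$$
the last step being merely the reindexing $i \mapsto i-1$ of the summation variable. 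By the definition of $H(\lambda_0)$, the coefficient of $B_{i-1}$ in this expression is exactly $h_{ij}$, so the $j$-th block of the left-hand side coincides with the $j$-th block of the right-hand side computed above. Letting $j$ range over $1,\dots,p+1$ yields the claimed equality.

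Finally I would record the dimension count: $H(\lambda_0)$ is $(k+1)\times(p+1)$, so $M(\lambda_0;r) = H(\lambda_0)\otimes I_n$ is $(k+1)n \times (p+1)n$ as stated; the choice $p = \min\{r,k\}$ is consistent because $Q^{(j-1)} \equiv 0$ whenever $j-1 > k$, so truncating at order $\min\{r,k\}$ loses no information relevant to a structure involving only the derivatives $Q,Q',\dots,Q^{(r-1)}$. There is essentially no obstacle here beyond careful index bookkeeping; the only subtlety worth flagging explicitly in the write-up is the alignment of the $1$-based indexing of the rows and columns of $H(\lambda_0)$ with the $0$-based indexing of the coefficients $B_i$ and of the scaled derivatives $\frac{1}{m!}Q^{(m)}(\lambda_0)$.
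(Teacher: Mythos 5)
Your proof is correct and follows essentially the same route as the paper: a direct block-by-block (column-by-column) verification using the Kronecker product structure $M(\lambda_0;r) = H(\lambda_0)\otimes I_n$, showing that the $j$-th block of the product equals $\frac{1}{(j-1)!}Q^{(j-1)}(\lambda_0)$. The paper states this verification in one sentence; you have simply spelled out the termwise differentiation and the index bookkeeping in full.
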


\begin{proof} The proof follows from the fact that for each $i = 1, \dots, p+1,$ $\frac{Q^{(i-1)}(\lambda_0)}{(i-1)!}$ is given by the product of $\left[\begin{matrix} B_0 & \cdots & B_k \end{matrix}\right]$ with $H_i(\lambda_0) \otimes I_n,$ $H_i(\lambda_0)$ being the $i$-th column of $H(\lambda_0).$
\end{proof}

\section{Polynomials for which the distance is zero}\label{zerocase}
Given a matrix polynomial $P(\lambda)$ it is interesting to identify cases when the distance under consideration is zero. One such situation is obviously the case that $k=1$ and $\lambda_0$ is an eigenvalue of $P(\lambda)$ of multiplicity at least $r.$ The main result of this section is a proof of the fact that this holds for all values of $k$ and also for singular matrix polynomials. The following theorem proves this for matrix pencils which is later generalized to matrix polynomials. For the sake of completeness, the case that the distance is zero if $\lambda_0$ is an eigenvalue of the pencil of multiplicity at least $r$ is also included in the statement of the theorem. Also note that although the theorem is proved with respect to the norm $\normp{ \cdot }_F,$ clearly it also holds for all other choices of norms.

\begin{theorem}\label{dist_sing}
	For a given  $n\times n$ matrix pencil $L(\lambda)=A-\lambda E$ and a positive integer $r \leqslant n,$ if
	\begin{enumerate}
		\item[(a)] $L(\lambda)$ is regular and algebraic multiplicity of $\lambda_0$ as an eigenvalue of $L(\lambda)$ is greater than or equal to $r$,  or
		\item[(b)] $L(\lambda)$ is singular,
	\end{enumerate}
	then it is arbitrarily close to a regular pencil having an elementary divisor $(\lambda-\lambda_0)^j$ where $j\geqslant r$.
\end{theorem}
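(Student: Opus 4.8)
The plan is to pass to the Kronecker canonical form (KCF) and reduce both parts to a single perturbation problem there. If $P,Q$ are constant invertible matrices, then $L\mapsto PLQ$ preserves regularity and all elementary divisors, and a perturbation $\Delta$ of the canonical form pulls back to the perturbation $P^{-1}\Delta Q^{-1}$ of $L$, whose norm is at most $\|P^{-1}\|\,\|Q^{-1}\|\,\normp{\Delta}_F$. Hence it is enough to show that for every $\varepsilon>0$ the KCF of $L$ admits a perturbation of norm less than $\varepsilon$ making it a regular pencil with an elementary divisor $(\lambda-\lambda_0)^j$, $j\geqslant r$.

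For (a), the KCF of $L$ has no singular blocks, and the finite Jordan blocks at $\lambda_0$, say $\lambda I_{q_1}-J_{q_1}(\lambda_0),\dots,\lambda I_{q_s}-J_{q_s}(\lambda_0)$, have sizes summing to $a$, the algebraic multiplicity of $\lambda_0$, with $a\geqslant r$. I would perturb only the $a\times a$ diagonal block $\bigoplus_{i=1}^{s}\bigl(\lambda I_{q_i}-J_{q_i}(\lambda_0)\bigr)$, by replacing each of the $s-1$ zero superdiagonal entries lying between two consecutive blocks with $\varepsilon$. In the perturbed matrix every superdiagonal entry is then nonzero, so its nilpotent part is a single $a\times a$ Jordan block and the block becomes $\lambda I_a-\widetilde J_\varepsilon$ with $\widetilde J_\varepsilon$ similar to $J_a(\lambda_0)$. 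Keeping the remaining blocks of the KCF unchanged, the pencil stays regular and acquires the elementary divisor $(\lambda-\lambda_0)^a$ with $a\geqslant r$; letting $\varepsilon\to 0$ settles (a).

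For (b) I would first observe that, $L$ being square and singular, counting the rows and columns contributed by the blocks of its KCF forces the number of right singular blocks and the number of left singular blocks to be equal and positive; in particular $L$ has at least one right block $\lambda G_p-F_p$ of size $p\times(p+1)$ and one left block $\lambda G_q^T-F_q^T$ of size $(q+1)\times q$. It suffices to prove that $L$ is arbitrarily close to a regular pencil having $\lambda_0$ as an eigenvalue of algebraic multiplicity at least $r$, since part (a) applied to that pencil, together with the triangle inequality, then gives the claim. To build such a pencil I would weld a right singular block to a left singular block: placed block-diagonally they occupy a square $(p+q+1)\times(p+q+1)$ pencil, and small entries coupling the extra column of the right block to the rows of the left block can be inserted so that the resulting square pencil is regular with determinant a nonzero constant times $(\lambda-\lambda_0)^{p+q+1}$. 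The key point is that the $\lambda$-leading coefficient of the welded pencil can be made arbitrarily small at the same time, so the degree of its determinant drops and a perturbation of norm $O(\varepsilon)$ can push all the eigenvalues involved onto $\lambda_0$; this is already seen on the trivial singular pencil $[\,0\,]$, perturbed to $[\,\varepsilon(\lambda_0-\lambda)\,]$, and on $\left[\begin{smallmatrix}-1&\lambda\\0&0\end{smallmatrix}\right]$, where a perturbation with entries of order $\varepsilon$ produces determinant $c(\lambda-\lambda_0)^2$ with $c$ of order $\varepsilon^2$. When the total size of the singular part of the KCF is still less than $r$ — which, since $r\leqslant n$, leaves room — the same kind of coupling perturbation is applied simultaneously between the singular part and enough of the regular part to absorb the remaining required eigenvalues into $\lambda_0$. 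Letting $\varepsilon\to 0$ completes (b).

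The step I expect to be the main obstacle is this explicit construction in (b). For a general singular Kronecker structure one has to choose which entries to perturb and at which scale — some of order $\varepsilon$, some of order $\varepsilon^2$, and so on — so that after the cancellations the determinant becomes exactly a small constant times $(\lambda-\lambda_0)^m$ with $m\geqslant r$ (and, if one wants a single elementary divisor directly rather than just algebraic multiplicity $r$, so that the pencil is nonderogatory at $\lambda_0$), while the whole perturbation stays of norm $O(\varepsilon)$. Proving that such a choice always exists is, in effect, a solvability statement for an underdetermined polynomial system near the locus $\det\equiv 0$, and carrying it out for arbitrary right and left minimal indices and an arbitrary regular part — not just for the small illustrative pencils above — is the technical heart of the argument.
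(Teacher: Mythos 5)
Your overall strategy is the same as the paper's: pass to the Kronecker canonical form, construct a perturbation that makes $\lambda_0$ an eigenvalue of high algebraic multiplicity, invoke part (a) to turn that multiplicity into a single Jordan block, and transfer from $\lambda_0=0$ to general $\lambda_0$. Part (a) of your argument (welding consecutive Jordan blocks at $\lambda_0$ with $\varepsilon$'s on the gaps between them) is correct and is exactly the observation the paper leaves as ``obvious.''

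Part (b), however, has a genuine gap that you yourself flag: you never actually produce the perturbation of the singular Kronecker form. You correctly reduce to building, for each $\varepsilon>0$, a perturbation of norm $<\varepsilon$ of the KCF that yields a regular pencil with $\lambda_0$ of multiplicity $\geqslant r$, and you correctly observe that the equal counts of left and right singular blocks let you weld them into square pieces. But the actual coupling scheme --- ``which entries to perturb and at which scale'' so that the determinant collapses to $c(\lambda-\lambda_0)^m$ --- is precisely what the paper supplies and what you explicitly defer as ``the technical heart.'' Without it, the proof is incomplete: the illustrative $1\times 1$ and $2\times 2$ examples don't establish the general case, and the appeal to ``a solvability statement for an underdetermined polynomial system'' is an assertion, not an argument.

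Two remarks on what the paper's construction shows you would not have guessed from your sketch. First, you need not juggle mixed scales $\varepsilon,\varepsilon^2,\dots$: the paper perturbs only the $E$-coefficient, places entries $\tilde\epsilon\lambda$ at a fixed set of positions (subdiagonals of the finite block, selected superdiagonals of the infinite block, a diagonal pattern inside the welded singular part, and three cross-block coupling entries), and a single determinant expansion gives $\det(L+\Delta L)=\pm\tilde\epsilon^{p}\lambda^{n}$ with all perturbation entries of the same order $\tilde\epsilon$. Second, the paper does not aim for multiplicity $r$ but goes for the maximal multiplicity $n$ in one shot, which simplifies the bookkeeping; the final reduction to a single elementary divisor and the shift to $\lambda_0\neq 0$ (via $\hat\lambda=\lambda-\lambda_0$, perturbing $E$ and compensating in $A$) then close the argument. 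So: right architecture, correct part (a), but the decisive construction for part (b) is missing.
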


\begin{proof}
	\item[(a)] The proof of this part is obvious owing to the structure of the Kronecker canonical form of a pencil having $\lambda_0$ as an eigenvalue of algebraic multiplicity atleast $r.$
	\item[(b)] Let $L(\lambda)$ be a singular pencil and $\epsilon > 0$ be arbitrarily chosen.
	Without loss of generality it may be assumed that $L(\lambda)$ is in Kronecker canonical form, i.e., $$L(\lambda) =\left[\begin{array}{c|c|c}R_f(\lambda)&&\\\hline &R_{inf}(\lambda)&\\\hline&&S(\lambda)\end{array}\right]$$ where $R_f(\lambda)$ and $R_{inf}(\lambda)$ represents the regular part of $L(\lambda)$ corresponding to finite and infinite eigenvalues respectively and $S(\lambda)$ represents the singular part. The idea of the proof is to construct a pencil $\Delta L(\lambda)$ such that $\normp{\Delta L}_F < \epsilon$ and $L(\lambda)+\Delta L(\lambda)$ is a regular matrix pencil with $0$ as an eigenvalue of algebraic multiplicity $n.$ Then by part (a), $L(\lambda)+\Delta L(\lambda)$ is arbitrarily close to having an elementary divisor $\lambda^n$ where clearly $n \geqslant r.$ The arguments are then extended to the case that $\lambda_0 \neq 0.$
	
	{\it Construction of $\Delta L(\lambda)$ for $\lambda_0 = 0:$} Initially it is assumed that all three types of blocks are present in $L(\lambda).$ Let the sizes of $R_f(\lambda)$, $R_{inf}(\lambda)$ and $S(\lambda)$ be $n_1\times n_1$, $n_2\times n_2$ and $n_3\times n_3$ respectively such that $n_1+n_2+n_3=n$. Also choose any $\tilde \epsilon \in (0, \epsilon ).$
	
	The block $R_f(\lambda)$ is bidiagonal with super-diagonal entries $0$ or $-1.$ Construct an $n_1 \times n_1$ pencil $\Delta R_f(\lambda)$ such that all the sub-diagonal entries are $\tilde \epsilon \lambda$ and all other entries are $0.$
	
	The block $R_{inf}(\lambda)$ is also bidiagonal with super-diagonal entries $\lambda$ or $0.$ Construct an $n_2 \times n_2$ pencil $\Delta R_{inf}(\lambda)$ by replacing the super-diagonal entries $\lambda$ and $0$ of  $R_{inf}(\lambda)$ by $0$ and $\tilde \epsilon \lambda$ respectively and setting all other entries to $0.$
	
	The singular part $S(\lambda)$ contains equal number of right and left singular blocks. Without loss of generality assume that right singular block and left singular blocks in  $S(\lambda)$ appear alternatively so that $S(\lambda)$ can be considered block diagonal with square diagonal blocks formed by placing one right and one left singular block next to each other. Each such diagonal block of $S(\lambda)$ has exactly one row and one column independent of $\lambda$ and all other rows and columns have exactly one entry as $\lambda.$ Assuming that there are $\mu$ blocks in $S(\lambda)$, suppose that $i_1\text{th},\dots,i_\mu\text{th}$ row and $j_1\text{th},\dots,j_\mu\text{th}$ columns of $S(\lambda)$ are independent of $\lambda.$ Construct an $n_3 \times n_3$ block diagonal pencil $\Delta S(\lambda)$ with square blocks on the diagonal of the same size as the diagonal blocks of $S(\lambda)$ such that the $(i_2,j_2)$th, $\dots$, $(i_\mu,j_\mu)$th entries are $\tilde \epsilon \lambda$ and all other entries are $0$.
	
	\noindent  Set $\Delta L(\lambda)=\left[\scalemath{.75}{
		\begin{array}{ccc|ccc|ccc}
		&&&\tilde \epsilon \lambda&&&&&\\
		&\Delta R_f(\lambda)&&&&&&&\\
		&&&&&&&&\\\hline
		&&&&&&&&\\
		&&&&\Delta R_{inf}(\lambda)&&&&\\
		&&&&&&&\tilde \epsilon \lambda&\\\hline
		&&&&&&&&\\
		&&\tilde \epsilon \lambda&&&&&\Delta S(\lambda)&\\
		&&&&&&&&
		\end{array}}\right],$ where $\tilde \epsilon \lambda$ has been placed in the $(1,n_1+1)$th, $(n_1+n_2,n_1+n_2+j_1)$th and $(n_1+n_2+i_1,n_1)$th positions of $\Delta L(\lambda).$ Choose $\tilde\epsilon$ small enough so that $\normp{\Delta L}_F < \epsilon.$ Now,
	$$L(\lambda)+\Delta L(\lambda)=\left[\scalemath{.85}{\begin{array}{ccc|ccc|ccc}&&&\tilde\epsilon \lambda&&&&&\\&\hat R_f(\lambda)&&&&&&&\\&&&&&&&&\\\hline  &&&&&&&&\\&&&&\hat R_{inf}(\lambda)&&&&\\&&&&&&&\tilde\epsilon \lambda&\\\hline&&&&&&&&\\&&\tilde\epsilon \lambda&&&&&\hat S(\lambda)&\\&&&&&&&&\end{array}}\right],$$ where
	$$\hat R_f(\lambda):=R_f(\lambda)+\Delta R_f(\lambda)=\begin{bmatrix}\lambda-\lambda_1&*&&\\\tilde\epsilon\lambda&\ddots&\ddots&\\&\ddots&\lambda-\lambda_{n_1-1}&*\\&&\tilde\epsilon\lambda&\lambda-\lambda_{n_1}\end{bmatrix},$$
	with  $*$ representing either $0$ or $-1$;
	$$\hat R_{inf}(\lambda)=R_{inf}(\lambda)+\Delta R_{inf}(\lambda)=\begin{bmatrix}-1&\star&&\\&\ddots&\ddots&\\&&-1&\star\\&&&-1\end{bmatrix},$$
	with $\star$ representing $\lambda$ or $\tilde\epsilon\lambda$ and $\hat S(\lambda)=S(\lambda)+\Delta S(\lambda).$ For $1 \leqslant \tilde i_1, \tilde i_2, \tilde j_1, \tilde j_2 \leqslant n,$ let $F[\tilde i_1,\tilde i_2; \tilde j_1, \tilde j_2](\lambda)$ denote the determinant of the submatrix of $L(\lambda) + \Delta L(\lambda)$ obtained by deleting rows $\tilde i_1, \tilde i_2$ and columns $\tilde j_1, \tilde j_2.$
	
	
	The determinant $\det (L(\lambda) + \Delta L(\lambda))$ is evaluated by first expanding along row $n_1 + n_2$ which has two nonzero entries, viz., $\tilde\epsilon \lambda$ at $n_1 + n_2 + j_1$ position and  $-1$ at $n_1 + n_2$ position and then expanding along column $n_1$ which has atmost three nonzero entries, viz., $\tilde\epsilon \lambda$ at $n_1 + n_2 + i_1$ position, $\lambda - \lambda_{n_1}$ at $n_1$ position and $-1$ (if $* = -1$) at $n_1 - 1$ position. Setting $\tilde i_1 = n_1 + n_2, \tilde j_1 = n_1 + n_2 + j_1, \tilde i_2 = n_1 + n_2 + i_1, \tilde j_2 = \tilde i_3 = n_1,$ $\tilde j_3 = \tilde i_1,$ and $\tilde i_4 = n_1 - 1,$
	
	\begin{eqnarray*}
		\det (L(\lambda)+\Delta L(\lambda)) & = & \pm (\tilde\epsilon \lambda)^2 F[\tilde i_1, \tilde i_2; \tilde j_1, \tilde j_2](\lambda) \pm \tilde\epsilon \lambda(\lambda - \lambda_{n_1})  F[\tilde i_1, \tilde i_3; \tilde j_1, \tilde j_2](\lambda) \\
		& \pm & \tilde\epsilon c \lambda  F[\tilde i_1, \tilde i_4; \tilde j_1, \tilde i_3](\lambda) \pm \tilde\epsilon \lambda  F[\tilde i_1, \tilde i_2; \tilde j_3, \tilde j_2](\lambda)\\
		& \pm & (\lambda - \lambda_{n_1})  F[\tilde i_1, \tilde i_3; \tilde j_3, \tilde j_2](\lambda) \pm  cF[\tilde i_1, \tilde i_4; \tilde j_3, \tilde j_2](\lambda)
	\end{eqnarray*}
	where $c = 0$ or $-1.$
	
	As $\hat S (\lambda)$ and its submatrices obtained by removing row $i_1$ or column $j_1$ are all singular pencils, the last five terms in the right hand side of the above expression  are zero. Now $F[\tilde i_1, \tilde i_2; \tilde j_1, \tilde j_2](\lambda)$ is the product of the determinants of two matrices, viz., $\hat S(\lambda)$ with rows $i_1$ and $j_1$ removed and the submatrix of
	$$\hat R(\lambda) := \left[\begin{array}{ccc|ccc}&&&\tilde\epsilon\lambda&&\\&\hat R_f(\lambda)&&&&\\&&&&&\\\hline&&&&&\\&&&&\hat R_{inf}(\lambda)&\\&&&&&
	\end{array}\right]$$
	obtained by removing column $n_1$ and the last row. Due to the manner of constructing $\hat S(\lambda),$ the determinant of $\hat S(\lambda)$ with rows $i_1$ and $j_1$ removed is given by $\pm \tilde\epsilon^{p_1}\lambda^{n_3 -1}$ for some positive integer $p_1.$ Let $G[\tilde i_1, \tilde i_2 ; \tilde j_1, \tilde j_2](\lambda)$ denote the determinant of $\hat R(\lambda)$
	with rows $\tilde i_1, \tilde i_2$ and columns $\tilde j_1$ and $\tilde j_2$ removed. Then the determinant of submatrix of $\hat R(\lambda)$ with the last row and column $n_1$ removed is given by $$\pm (\tilde\epsilon\lambda) G[n_1 + n_2, 1; n_1, n_1+1](\lambda) \pm G[n_1 + n_2, n_1 + 1;n_1, n_1+1](\lambda)$$ when expanded along column $n_1 + 1.$ But $G[n_1 + n_2, n_1 + 1;n_1, n_1+1](\lambda) = 0$ as the submatrix obtained by eliminating the indicated rows of $\hat R(\lambda)$ is singular.
	Therefore,
	\begin{equation}\label{eqone} \det (L(\lambda)+\Delta L(\lambda))  =  \pm (\tilde\epsilon \lambda)^2\left(\tilde\epsilon^{p_1}\lambda^{n_3-1}\right)\left(\tilde\epsilon \lambda G[n_1 + n_2, 1; n_1, n_1+1](\lambda)\right),
	\end{equation}
	where
	\begin{eqnarray}
	G[n_1 + n_2, 1; n_1, n_1+1](\lambda) & = & \mathrm{det} \left(\left[\begin{array}{cccc|cccc}\tilde\epsilon\lambda&\lambda-\lambda_2&*&&&&&\\
	&\ddots&\ddots&*&&&&\\
	&&\tilde\epsilon\lambda&\lambda-\lambda_{n_1-1}&&&&\\
	&&&\tilde\epsilon\lambda&&&&\\
	\hline&&&&\star&&&\\
	&&&&-1&\ddots&&\\
	&&&&&\ddots&\star&\\
	&&&&&&-1&\star\end{array}\right]\right) \nonumber \\
	&=& \tilde\epsilon^{p_2}\lambda^{n_1+n_2-2}, \label{eqtwo}
	\end{eqnarray}
	for some positive integer $p_2.$ From \eqref{eqone} and \eqref{eqtwo} $$\det (L(\lambda)+\Delta L(\lambda)) = \pm\tilde\epsilon^{p_1 + p_2 + 3}\lambda^{n_1 + n_2 + n_3} = \pm \tilde\epsilon^p\lambda^n,$$ for $p = p_1 + p_2 + 3.$ This establishes that $L(\lambda) + \Delta L(\lambda)$ is a regular matrix pencil with $0$ an eigenvalue of algebraic multiplicity $n.$
	
	In those cases where all three types of blocks $R_f(\lambda),$ $R_{inf}(\lambda)$ and $S(\lambda)$ are not present in $L(\lambda),$ the strategy for forming $\Delta L(\lambda)$ are as follows.
	\begin{itemize}
		\item If the block $R_f(\lambda)$ is not present then
		$\Delta L(\lambda)=\left[\scalemath{.75}{
			\begin{array}{ccc|ccc}
			&&&&&\\
			&\Delta R_{inf}(\lambda)&&&&\\
			&&&&\tilde\epsilon \lambda&\\\hline
			&&&&&\\
			\tilde\epsilon \lambda&&&&\Delta S(\lambda)&\\
			&&&&&
			\end{array}}\right]$, where the $(n_2,n_2+j_1)$ and $(n_2+i_1,1)$ entries of $\Delta \hat L(\lambda)$ are $\tilde\epsilon \lambda$ and the construction of $\Delta R_{inf}(\lambda)$ and $\Delta S(\lambda)$ are the same as above.
		\item If the block $R_{inf}(\lambda)$ is not present then
		$\Delta L(\lambda)=\left[\scalemath{.75}{
			\begin{array}{ccc|ccc}
			&&&&\tilde\epsilon \lambda&\\
			&\Delta R_f(\lambda)&&&&\\
			&&&&&\\\hline
			&&&&&\\
			&&\tilde\epsilon \lambda&&\Delta S(\lambda)&\\
			&&&&&
			\end{array}}\right]$, where the $(n_1+i_1,n_1)$ and $(1,n_1+j_1)$ entries of $\Delta \hat L(\lambda)$ are $\tilde\epsilon \lambda$ and the construction of $\Delta R_f(\lambda)$ and $\Delta S(\lambda)$ are the same as above.
		\item If only the singular blocks occur in $L(\lambda)$ then we construct $\Delta L(\lambda)$ as a block diagonal matrix with blocks of the same size as $S(\lambda)$ such that the $(i_1,j_1)$, $\dots$, $(i_\mu,j_\mu)$ entries are $\tilde\epsilon \lambda$ and all other entries are $0$.
	\end{itemize}
	In each case the above arguments may be extended to show that by choosing $\tilde{\epsilon}$ small enough, $\normp{\Delta L}_F < \epsilon$ and $L(\lambda)+\Delta L(\lambda)$ is a regular pencil whose determinant is a scalar multiple of $\lambda^n.$
	
	Now suppose $\lambda_0 \neq 0.$ The pencil $L(\lambda)$ may be written in the form $$L(\lambda) = A - \lambda_0E - (\lambda - \lambda_0)E.$$ Setting $\hat{A} = A - \lambda_0 E$ and $\hat{\lambda} = \lambda - \lambda_0$ and arguing as above, for a given $\epsilon > 0$ there exists $\Delta E \in \C^{n \times n}$ such $\hat{A} - \hat{\lambda} (E + \Delta E)$ is a regular pencil with $\mathrm{det} (\hat{A} - \hat{\lambda} (E + \Delta E)) = \pm\tilde\epsilon^p\hat\lambda^n$ for some $\tilde{\epsilon} \in (0, \epsilon)$ such that $\norm{\Delta E}_F < \epsilon.$ This implies that $$\mathrm{det}((A + \lambda_0 \Delta E) - \lambda(E + \Delta E)) = \pm\tilde\epsilon^p(\lambda-\lambda_0)^n.$$ Therefore for $\Delta L(\lambda) := (\lambda_0 \Delta E) - \lambda \Delta E,$ $\lambda_0$ is an eigenvalue of $(L + \Delta L)(\lambda)$ of algebraic multiplicity $n \geqslant r.$ The proof now follows from the fact that $\tilde{\epsilon}$ may be chosen small enough so that $\normp{\Delta L}_F = \sqrt{|\lambda_0|^2 + 1}\| \Delta E\|_F <  \epsilon.$ Hence the proof.
\end{proof}

The above result may be extended to matrix polynomials by considering the first companion linearization
$$C_1(\lambda)=\lambda \begin{bmatrix}A_k & & &\\ &I_n& & \\ & & \ddots &\\ & & & I_n\end{bmatrix}+\begin{bmatrix}A_{k-1} &A_{k-2} &\cdots &A_0\\ -I_n& & &0\\ & \ddots&  &\\ & & -I_n& 0\end{bmatrix}$$ of $P(\lambda) = \sum_{i = 0}^k \lambda^iA_i.$ It is an example of a block Kronecker linearization as introduced in~\cite{DopLPV18} where it was shown that if $L(\lambda)$ is a block Kronecker linearization of $P(\lambda)$ and $\Delta L(\lambda)$ is a pencil of the same size as $L(\lambda)$ with $\normp{\Delta L}_F<\epsilon,$ for some sufficiently small $\epsilon > 0,$ then $L(\lambda)+\Delta L(\lambda)$ is a strong linearization of $P(\lambda)+\Delta P(\lambda)$ such that $\normp{\Delta P}_F<C\epsilon$ for some positive constant $C.$ Due to this result the following theorem is an immediate consequence of \cref{dist_sing}.

\begin{theorem}\label{dist_is_zero}
	For a given $n \times n$ matrix polynomial $P(\lambda)$ of degree $k,$ and a positive integer $r \leqslant kn,$ if
	\begin{itemize}
		\item[(a)] $P(\lambda)$ is regular and $\lambda_0$ is an eigenvalue of algebraic multiplicity greater than or equal to $r,$  or
		\item[(b)] $P(\lambda)$ is singular,
	\end{itemize}
	then $P(\lambda)$ is arbitrarily close to a regular matrix polynomial having an elementary divisor $(\lambda-\lambda_0)^j$ where $j\geqslant r$.
\end{theorem}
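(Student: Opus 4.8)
The plan is to transfer the problem to the first companion linearization $C_1(\lambda)$ of $P(\lambda),$ which is a $kn \times kn$ matrix pencil, invoke \cref{dist_sing} for $C_1(\lambda),$ and then pull the conclusion back to $P(\lambda)$ using the perturbation stability of strong linearizations recorded in~\cite{DopLPV18}.

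First I would use the standard fact that $C_1(\lambda)$ is a strong linearization of $P(\lambda),$ so that the two have exactly the same finite elementary divisors. In particular, $P(\lambda)$ is regular if and only if $C_1(\lambda)$ is regular, and when both are regular the algebraic multiplicity of $\lambda_0$ as an eigenvalue of $P(\lambda)$ equals that of $C_1(\lambda).$ Consequently, hypothesis (a) for $P(\lambda)$ yields hypothesis (a) of \cref{dist_sing} for the pencil $C_1(\lambda)$ --- here one uses $r \leqslant kn,$ the size of $C_1(\lambda)$ --- while hypothesis (b) for $P(\lambda)$ yields hypothesis (b) of \cref{dist_sing} for $C_1(\lambda).$

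Next, fix $\epsilon > 0$ and let $C$ be the constant from~\cite{DopLPV18} associated with the block Kronecker linearization $C_1(\lambda).$ Applying \cref{dist_sing} to $C_1(\lambda),$ there is a pencil $\Delta L(\lambda)$ of the same size with $\normp{\Delta L}_F$ smaller than both $\epsilon/C$ and the threshold required by~\cite{DopLPV18}, such that $C_1(\lambda) + \Delta L(\lambda)$ is a regular pencil with an elementary divisor $(\lambda - \lambda_0)^j,$ $j \geqslant r.$ By~\cite{DopLPV18}, $C_1(\lambda) + \Delta L(\lambda)$ is then a strong linearization of some matrix polynomial $P(\lambda) + \Delta P(\lambda)$ of degree at most $k$ with $\normp{\Delta P}_F < \epsilon.$ Since a strong linearization and its polynomial share regularity and all finite elementary divisors, $P(\lambda) + \Delta P(\lambda)$ is regular and has the elementary divisor $(\lambda - \lambda_0)^j,$ $j \geqslant r.$ As $\epsilon$ was arbitrary, $P(\lambda)$ is arbitrarily close to such a polynomial, which proves both (a) and (b).

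The only subtle point is the final transfer step: one needs not merely that small perturbations of $C_1(\lambda)$ stay regular with $\lambda_0$ of high algebraic multiplicity, but that they remain strong linearizations of a \emph{nearby} polynomial, so that the prescribed elementary divisor descends to $P(\lambda) + \Delta P(\lambda)$ itself rather than only the algebraic multiplicity of $\lambda_0.$ This is precisely the content of the stability result of~\cite{DopLPV18}, so once it is invoked the argument is immediate; the remaining bookkeeping item is simply to confirm that the perturbed pencil corresponds to a polynomial of grade $k,$ which also follows from the block Kronecker structure.
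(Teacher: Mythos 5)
Your proposal is correct and is essentially the same argument the paper gives: apply \cref{dist_sing} to the first companion linearization $C_1(\lambda)$, and then use the perturbation stability of block Kronecker linearizations from~\cite{DopLPV18} to pull the perturbation back to $P(\lambda)$. You have simply written out explicitly the steps the paper condenses into the remark that the theorem ``is an immediate consequence of \cref{dist_sing}.''
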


In fact, by using the arguments in the proof of \cref{dist_sing}, it is clear that any $n \times n$ singular matrix polynomial $P(\lambda)$ of degree $k$ is arbitrarily close to a regular matrix polynomial having an elementary divisor $(\lambda-\lambda_0)^{kn}.$ In view of the above theorem, it is now possible to assume without loss of generality the distance $\delta_s(P,\lambda_0, r)$ for $s=2$ of $F$ are being computed for a regular matrix polynomial $P(\lambda)$ which does not have $\lambda_0$ as an eigenvalue of algebraic multiplicity $r.$ This also has the effect of removing the uncertainty that was earlier associated with the situation that perturbations being made to the matrix polynomial for the desired objectives could result in a singular matrix polynomial.

\section{A characterization via block Toeplitz matrices}
One of the aims of this work is to show that for appropriate choices of norms computing the distance to a matrix polynomial with an elementary divisor $(\lambda-\lambda_0)^j, j\geqslant r,$ is equivalent to finding a structured singular value or generalized $\mu$-value. The next result is an important step in this direction. Since the expression for the optimization is more aesthetic if $r$ is replaced by $r+1,$ in the rest of the paper the distance is considered in the form $\delta_s(P,\lambda_0,r+1),$ where $s = 2$ or $F.$ The following definition will be frequently used.

For any  $\gamma = \left[\begin{array}{cccc} \gamma_1 & \gamma_2 & \cdots & \gamma_r \end{array}\right] \in \Gamma$ given by \eqref{gamma} and $\alpha \in \C,$ let $T_\gamma(Q,\alpha)$ be a function from the set of all $n \times n$ matrix polynomial $Q(\lambda) = \sum_{i=0}^k \lambda^iA_i,$ to the set of $(r+1)n \times (r+1)n$ matrices defined by
\begin{equation}\label{tgamma}
T_\gamma(Q,\alpha):=\begin{bmatrix}
Q(\alpha)&&&&\\
\gamma_1 Q^{\prime}(\alpha)&Q(\alpha)&&&\\
\gamma_1 \gamma_2 \frac{Q^{\prime\prime}(\alpha)}{2!}&\gamma_2 Q^{\prime}(\alpha)&Q(\alpha)&&\\
\vdots&\vdots& \ddots&\ddots&\\
\displaystyle{\prod_{i=1}^r}\gamma_i \frac{Q^r(\alpha)}{r!} &\displaystyle{\prod_{i=2}^r}\gamma_i\frac{Q^{r-1}(\alpha)}{(r-1)!}  &\cdots & \gamma_rQ^{\prime}(\alpha) &Q(\alpha)
\end{bmatrix}.
\end{equation}

\begin{theorem}\label{sigma_result}
	A scalar $\lambda_0\in\mathbb{C}$ is a eigenvalue of a $n\times n$ matrix polynomial $P(\lambda)$ of algebraic multiplicity at least $r+1$ if and only if  the rank of $T_\gamma(P,\lambda_0)$ as defined by \eqref{tgamma} is at most $(r+1)(n-1).$
\end{theorem}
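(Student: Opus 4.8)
The plan is to reduce the rank condition on $T_\gamma(P,\lambda_0)$ first to a rank condition on the ``plain'' block Toeplitz matrix $T(P,\lambda_0):=T_{(1,\dots,1)}(P,\lambda_0)$, and then, via the Smith form of $P$, to an explicit count of derivatives of the invariant polynomials. Throughout I would use the standing assumption (in force after \cref{dist_is_zero}) that $P$ is regular. The first step is purely bookkeeping: using that the entries $\gamma_i$ are nonzero (which is part of the definition of $\Gamma$ in \eqref{gamma}), set $D:=\mathrm{diag}\!\left(I_n,\ \gamma_1 I_n,\ \gamma_1\gamma_2 I_n,\ \dots,\ \prod_{i=1}^{r}\gamma_i\,I_n\right)$; comparing the $(a,b)$ blocks, since $\bigl(\prod_{i=1}^{a-1}\gamma_i\bigr)\big/\bigl(\prod_{i=1}^{b-1}\gamma_i\bigr)=\prod_{i=b}^{a-1}\gamma_i$, one gets $T_\gamma(P,\lambda_0)=D\,T(P,\lambda_0)\,D^{-1}$, hence $\rank T_\gamma(P,\lambda_0)=\rank T(P,\lambda_0)$. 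So it suffices to treat $\gamma=(1,\dots,1)$.

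The key structural fact I would establish next is multiplicativity: for $n\times n$ matrix polynomials $Q_1,Q_2$ one has $T(Q_1Q_2,\lambda_0)=T(Q_1,\lambda_0)\,T(Q_2,\lambda_0)$, because the $(a,b)$ block of the right-hand side is $\sum_{c}\frac{Q_1^{(a-c)}(\lambda_0)}{(a-c)!}\frac{Q_2^{(c-b)}(\lambda_0)}{(c-b)!}$, which is exactly the coefficient of $(\lambda-\lambda_0)^{a-b}$ in $Q_1Q_2$; truncation to $r+1$ blocks causes no trouble since block index differences never exceed $r$. In particular, if $U(\lambda)$ has $\det U$ a nonzero constant then $U^{-1}$ is again a matrix polynomial and $T(U,\lambda_0)T(U^{-1},\lambda_0)=T(I,\lambda_0)=I$, so $T(U,\lambda_0)$ is invertible. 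Writing the Smith form $P=E\,D_S\,F$ with $E,F$ unimodular and $D_S=\mathrm{diag}(d_1,\dots,d_n)$ (all $d_i\neq 0$ since $P$ is regular), multiplicativity gives $\rank T(P,\lambda_0)=\rank T(D_S,\lambda_0)$, and a permutation of rows and columns turns $T(D_S,\lambda_0)$ into a block diagonal matrix whose diagonal blocks are the scalar $(r+1)\times(r+1)$ lower triangular Toeplitz matrices $T(d_i,\lambda_0)$, $i=1,\dots,n$. Factoring $d_i(\lambda)=(\lambda-\lambda_0)^{m_i}g_i(\lambda)$ with $g_i(\lambda_0)\neq 0$, multiplicativity once more gives $T(d_i,\lambda_0)=T((\lambda-\lambda_0)^{m_i},\lambda_0)\,T(g_i,\lambda_0)$ with the second factor invertible and the first the $m_i$-th power of the nilpotent down-shift, of rank $\max(0,(r+1)-m_i)$. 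Summing, $\rank T(P,\lambda_0)=(r+1)n-\sum_{i=1}^{n}\min(m_i,r+1)$.

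Finally, since $\sum_i m_i$ is by definition the algebraic multiplicity of $\lambda_0$ as an eigenvalue of $P$, I would conclude with the elementary equivalence $\sum_i\min(m_i,r+1)\ge r+1\iff \sum_i m_i\ge r+1$: the forward direction follows from $\min(m_i,r+1)\le m_i$, and the reverse because either some $m_i\ge r+1$, forcing $\min(m_i,r+1)=r+1$, or all $m_i\le r+1$, so $\sum_i\min(m_i,r+1)=\sum_i m_i$. Combining the three reductions, $\rank T_\gamma(P,\lambda_0)\le (r+1)(n-1)$ iff $\sum_i\min(m_i,r+1)\ge r+1$ iff the algebraic multiplicity of $\lambda_0$ is at least $r+1$. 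The only points that need care are the index manipulation in the conjugation identity of the first step and the verification that truncation to $r+1$ blocks does not break multiplicativity; everything else is routine linear algebra. One could alternatively argue directly from the Jordan chain equations recalled in \cref{basic}, identifying $\ker T_\gamma(P,\lambda_0)$ with a space of generalized Jordan chains, but that essentially re-derives the local Smith structure, so the route above seems cleaner.
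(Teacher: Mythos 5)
Your argument is correct, and it takes a genuinely different route from the paper. The paper proves the forward direction by writing down explicit vectors in $\ker T_\gamma(P,\lambda_0)$ built from the Jordan chains of $\lambda_0$, and proves the converse by a fairly intricate exchange argument that extracts Jordan chains of total length $\geq r+1$ from any $r+1$ linearly independent kernel vectors. You instead first observe that $T_\gamma$ is a diagonal similarity transform of $T_{(1,\dots,1)}$ (so the $\gamma$'s are irrelevant for rank), then invoke multiplicativity of the Taylor-coefficient block Toeplitz functor $Q\mapsto T(Q,\lambda_0)$ — a Leibniz-rule computation — to reduce, via the Smith form, to the $n$ scalar lower-triangular Toeplitz matrices $T(d_i,\lambda_0)$, each of which factors as an invertible matrix times a power of the nilpotent shift. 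This yields the exact formula $\rank T_\gamma(P,\lambda_0)=(r+1)n-\sum_i\min(m_i,r+1)$ where $m_i$ are the partial multiplicities of $\lambda_0$, from which the theorem falls out via the elementary equivalence $\sum_i\min(m_i,r+1)\ge r+1 \iff \sum_i m_i\ge r+1$. Your route is shorter and more structural, and it buys an exact rank formula rather than just the threshold characterization; the paper's proof is more elementary in its prerequisites (no Smith form) and has the side benefit of producing explicit Jordan chains from kernel vectors, which is in the spirit of the later optimization formulation. One point worth flagging explicitly in a write-up: the Smith-form route relies on $P$ being regular (all $d_i\neq 0$), which you note is the paper's standing assumption after \cref{dist_is_zero}; for a singular $P$ the rank bound would hold trivially while the multiplicity statement could fail, so the regularity hypothesis is not optional here.
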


\begin{proof}
	Let $\lambda_0$ be an eigenvalue of $P(\lambda)$ of multiplicity $r+1$. Suppose there are $p$ Jordan chains $\left\{x_{11},x_{12}, \ldots, x_{1k_1}\right\}$, $\left\{x_{21},x_{22},\dots, x_{2k_2}\right\}$, $\dots$, $\left\{ x_{p1}, x_{p2}, \dots, x_{pk_p}\right\}$ of $P(\lambda)$ corresponding to $\lambda_0$ where $x_{ij} \in \C^n$ for $i = 1, \ldots, p,$ $j = 1, \ldots, k_i,$ satisfying $\sum_{i = 1}^p k_i \geqslant r+1.$ The $i^{\mathrm{th}}$ Jordan chain satisfies the following equations for $i = 1, \dots, p.$
	\begin{align*}
		&P(\lambda_0)x_{i1}=0\\
		&P(\lambda_0)x_{i2}+P^\prime(\lambda_0)x_{i1}=0\\
		&\vdots\\
		&P(\lambda_0)x_{ik_i}+P^\prime(\lambda_0)x_{i(k_i-1)}+\frac{P^{\prime\prime}(\lambda_0)}{2!}x_{i(k_i-2)}+\dots+\frac{P^{k_i-1}(\lambda_0)}{(k_i-1)!}x_{i1}=0.
	\end{align*}
	It may be assumed that $\{x_{11},x_{21},\dots,x_{p1}\}$ is a linear independent set. Clearly the $i^{\mathrm{th}}$ Jordan chain contributes the following $k_i$ vectors
	 $$\begin{bmatrix}0\\\vdots\\0\\ \frac{x_{i1}}{\gamma_{(r-k_i+2)}\cdots\gamma_r}\\\vdots \\ \frac{x_{i(k_i-3)}}{\gamma_{r-2}\gamma_{r-1}\gamma_r}\\\frac{x_{i(k_i-2)}}{\gamma_{r-1}\gamma_r}\\\frac{x_{i(k_i-1)}}{\gamma_r}\\x_{ik_i}\end{bmatrix},
	\begin{bmatrix}0\\0\\\vdots\\0\\ \frac{x_{i1}}{\gamma_{(r-k_i+3)}\dots\gamma_r}\\ \vdots \\\frac{x_{i(k_i-3)}}{\gamma_{r-1}\gamma_r}\\\frac{x_{i(k_i-2)}}{\gamma_r}\\x_{i(k_i-1)}\end{bmatrix},
	\begin{bmatrix}0\\0\\0\\\vdots\\0\\ \frac{x_{i1}}{\gamma_{(r-k_1+4)}\dots\gamma_r}\\ \vdots \\\frac{x_{i(k_i-3)}}{\gamma_r}\\x_{i(k_i-2)}\end{bmatrix},
	\dots, \begin{bmatrix}0\\0\\0\\0\\0\\0\\\vdots\\0\\x_{i1}\end{bmatrix}$$
	of length$(r+1)n$ to the null space $N(T_\gamma(P, \lambda_0))$ of $T_\gamma(P,\lambda_0)$ for $i=1,\dots,p$. All the above vectors are linearly independent as $\{x_{11},x_{21},\dots,x_{p1}\}$ are linearly independent. Hence the nullity of $T_\gamma(P,\lambda_0)$ is at least $r+1.$
	
	Conversely suppose that $\mathrm{rank}(T_\gamma(P, \lambda_0)) \leqslant (r+1)(n-r)$ so that the nullity of $T_\gamma(P, \lambda_0)$ is at least $r+1.$ Let $\{x_1,x_2,\dots ,x_{r+1}\}$ be a linearly independent ordered list in $N(T_\gamma(P,\lambda_0)),$ where $$x_j=\begin{bmatrix}x_{r+1,j}^T & \cdots & x_{i,j}^T & \cdots & x_{2,j}^T & x_{1,j}^T\end{bmatrix}^T$$ with $x_{i,j}\in\mathbb{C}^{n}$ for $i,j\in\{1,\dots,r+1\}.$  If $x_{r+1,j}\neq 0$ for some $j,$ then $x_j$ will be a Jordan chain of $P(\lambda)$ of length $r+1$ corresponding to $\lambda_0$ and the proof follows. So assume without loss of generality that for each $j = 1, \dots, r+1,$ there exists $t_j, 0 < t_j < r+1$ such that $x_{ij} = 0$ for all $i = t_j+1, \dots, r+1.$ Let $t = \max_{1 \leqslant j \leqslant r+1} t_j$ and $p = t - \min_{1 \leqslant j \leqslant r+1} t_j.$
	By reordering the list if necessary, it may be assumed that the first $k_1 + \cdots + k_s $ vectors of the list satisfy $x_{ij} = 0$ for all $s = 1, \dots, p+1$ and $i = t-s+2, \dots, r+1,$   so that $k_1 + \cdots k_{p+1} = r+1.$ Note that $k_j$ may be zero for some or all $j = 2, \dots, p+1.$ Consider $X = \left[\begin{array}{cccc} x_1 & x_2 & \cdots & x_{r+1} \end{array}\right].$ Then in fact,
	$$X=\scalemath{0.85}{\begin{bmatrix}
		0&\cdots&0&0&\cdots&0&\cdots&0&\cdots&0\\
		\vdots&\vdots&\vdots&\vdots&\vdots&\vdots&\vdots&\vdots&\vdots&\vdots\\
		0&\cdots&0&0&\cdots&0&\cdots&0&\cdots&0\\
		x_{t,1}&\cdots&x_{t,k_1}&0&\cdots&0&&0&\cdots&0\\
		\cdots&\cdots&\cdots&x_{t-1,k_1+1}&\cdots&x_{t-1,k_1+k_2}&\cdots&\cdots&\cdots&\cdots\\
		\vdots&\vdots&\vdots&\vdots&\vdots&\vdots&\vdots&0&\cdots&0\\
		\cdots&\cdots&\cdots&\cdots&\cdots&\cdots&\cdots&x_{t-p,r+2-k_{p+1}}&\cdots&x_{t-p,r+1}\\
		\vdots&\vdots&\vdots&\vdots&\vdots&\vdots&\vdots&\vdots&\vdots&\vdots\\
		x_{1,1}&\cdots&x_{1,k_1}&x_{1,k_1+1}&\cdots&x_{1,k_1+k_2}&\cdots&x_{1,r+2-k_{p+1}}&\cdots&x_{1,r+1}
		\end{bmatrix}}.$$
	It is possible that some of the vectors $x_{t-s+1, 1+ \sum_{j=1}^sk_j}, \ldots, x_{t-s+1, \sum_{j=1}^{s+1}k_j}$ in the consecutive columns $1+ \sum_{j=1}^{s-1}k_j$ to $\sum_{j=1}^sk_j$ of $X$ can be made zero for each $s = 1, \dots, p+1,$ via elementary column operations on $X$ that affect only those columns. The columns of the transformed $X$ will also be a linearly independent list in $N(T_\gamma(P,\lambda_0)).$ Assume without loss of generality that $X$ has been formed  after such transformations have already been made and the submatrices $$\left[\begin{array}{ccc} x_{t-s+1, 1+ \sum_{j=1}^{s-1}k_j} & \cdots & x_{t-s+1, \sum_{j=1}^{s}k_j}\end{array}\right]$$ of $X$ have full column rank for each  $s = 1, \dots, p+1.$
	
	Clearly the columns of $X$ are linearly independent and belong to $N(T_\gamma(P,\lambda_0)).$ Also the first $k_1$ columns give rise to Jordan chains of $P(\lambda)$ corresponding to $\lambda_0$ of length $t,$ the next $k_2$ columns give rise to Jordan chains of of length $t-1$ and so on, with the last $k_{p+1}$ columns forming Jordan chains of length $t-p.$ Due to the structure of $X,$ $\beta_s=\{x_{t-s+1,k_1+\dots+k_{s-1}+1},\dots,x_{t-s+1,k_1+\dots+k_{s}}\},$ $s=1,1,\dots,p+1,$ are ordered lists of linearly independent vectors. Consider the list $$\beta:=\beta_{p+1}=\{x_{t-p,k_1+\dots+k_p+1},\dots,x_{t-p,k_1+\dots+k_{p+1}}\}.$$ If the first vector of the list $\beta_p,$ does not belong to $\mathrm{span}(\beta),$ it is included in $\beta$. If it belongs to $\mathrm{span}(\beta)$ then it can be uniquely represented by a linear combination of the vectors of $\beta$ and at least one of the scalar coefficients in the representation is nonzero. Replace one of the vectors from $\beta$ whose associated coefficient in the linear combination is nonzero by the first vector of $\beta_p.$ Now consider the second vector of the list $\beta_p.$ If it does not belong to the span of the updated $\beta,$ then it is included in $\beta$. Otherwise it is a linear combination of the vectors of $\beta$ with at least one of the scalar coefficients in the linear combination being non zero. As $\beta_p$ is a linearly independent list, a vector associated with such a non zero scalar in the linear combination can be chosen from $\beta_{p+1}.$ The set $\beta$ is further updated by replacing this vector by the second vector from $\beta_{p}.$ This process is continued for the rest of the vectors in $\beta_{p}$ as well as those of $\beta_{p-1}, \dots, \beta_1.$ The final $\beta$ clearly forms a linearly independent list of eigenvectors of $P(\lambda)$ corresponding to $\lambda_0.$ Moreover the sums of the lengths of the Jordan chains associated with these eigenvectors is at least  $tk_1 + \sum_{s = 1}^p(t-s)(k_{s+1} - k_s).$ But
	$$tk_1 + \sum_{s = 1}^p(t-s)(k_{s+1} - k_s) = \sum_{s = 1}^pk_s + (t-p)k_{p+1} \geqslant  \sum_{s=1}^{p+1}k_s = r+1.$$
	Hence $\lambda_0$ is an eigenvalue of $P(\lambda)$ of algebraic multiplicity at least $r+1$ and the proof follows.
\end{proof}

\begin{remark} \cref{sigma_result} is established in \cite{PapP08} for the particular case that $P(\lambda)$ has an eigenvalue of multiplicity $2.$ Under the assumption that the leading coefficient matrix is full rank, another characterization of a matrix polynomial $P(\lambda)$ having a specified eigenvalue of multiplicity $r$ is obtained in~\cite{KarM15} via a different block Toeplitz matrix that involves $r(r+1)/2$ parameters.
\end{remark}

In view of part (a) of \cref{dist_is_zero}, the following corollary of \cref{sigma_result} is immediate.

\begin{corollary}\label{cor_sigma_result} Given any $n \times n$ matrix polynomial $P(\lambda)$ consider the collection $\mathcal{S}(P,\lambda_0)$ of all $n \times n$ matrix polynomials
	$(\Delta P)(\lambda) := \sum_{i = 0}^k \lambda^i \Delta A_i$ such that the block Toeplitz matrices $T_{\gamma}(P + \Delta P, \lambda_0)$ as defined in~\eqref{tgamma} with $\gamma = [1 \cdots 1] ,$
	have rank at most $(r+1)(n-1).$ For any choice of norm $\normp{\cdot},$ the distance to a nearest matrix polynomial
	with an elementary divisor $(\lambda - \lambda_0)^j, j \geqslant r+1,$ is given by $\inf\{\normp{\Delta P}: \Delta P(\lambda) \in \mathcal{S}(P,\lambda_0)\}.$
\end{corollary}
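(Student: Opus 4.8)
The proof combines \cref{sigma_result} with \cref{dist_is_zero}. Write
\[
\delta := \inf\left\{\normp{\Delta P} : P + \Delta P \text{ has an elementary divisor } (\lambda - \lambda_0)^j,\ j \geqslant r+1\right\}
\]
for the distance to be computed and $\delta' := \inf\{\normp{\Delta P} : \Delta P \in \mathcal{S}(P,\lambda_0)\}$ for the right hand side of the asserted identity. The plan is to prove the two inequalities $\delta' \leqslant \delta$ and $\delta \leqslant \delta'$ separately, the case $\mathcal{S}(P,\lambda_0) = \emptyset$ being covered automatically since then both quantities equal $+\infty$.

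For $\delta' \leqslant \delta$ I would take an arbitrary $\Delta P(\lambda) = \sum_{i=0}^k \lambda^i \Delta A_i$ for which $P + \Delta P$ has an elementary divisor $(\lambda - \lambda_0)^j$ with $j \geqslant r+1$. Then $\lambda_0$ is an eigenvalue of $P + \Delta P$ of algebraic multiplicity at least $j \geqslant r+1$; since algebraic multiplicity is read off the Smith form, this step does not need $P + \Delta P$ to be regular. By \cref{sigma_result}, applied with $\gamma = [1 \cdots 1]$, the rank of $T_\gamma(P + \Delta P, \lambda_0)$ is at most $(r+1)(n-1)$, i.e.\ $\Delta P \in \mathcal{S}(P,\lambda_0)$. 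Hence $\delta' \leqslant \normp{\Delta P}$, and taking the infimum over all admissible $\Delta P$ gives $\delta' \leqslant \delta$.

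For the reverse inequality I would fix $\Delta P \in \mathcal{S}(P,\lambda_0)$ and an arbitrary $\epsilon > 0$, and produce a matrix polynomial with the required elementary divisor within distance $\normp{\Delta P} + \epsilon$ of $P$. If $P + \Delta P$ is singular, \cref{dist_is_zero}(b) already furnishes a regular matrix polynomial $Q(\lambda)$ with an elementary divisor $(\lambda - \lambda_0)^{kn}$ (and $kn \geqslant r+1$) such that $\normp{Q - (P + \Delta P)} < \epsilon$. If $P + \Delta P$ is regular, then since $\Delta P \in \mathcal{S}(P,\lambda_0)$, \cref{sigma_result} forces $\lambda_0$ to be an eigenvalue of $P + \Delta P$ of algebraic multiplicity at least $r+1$, so \cref{dist_is_zero}(a) supplies a regular $Q(\lambda)$ with an elementary divisor $(\lambda-\lambda_0)^j$, $j \geqslant r+1$, and $\normp{Q - (P + \Delta P)} < \epsilon$. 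In either case $\delta \leqslant \normp{Q - P} \leqslant \normp{\Delta P} + \epsilon$; letting $\epsilon \to 0$ and then taking the infimum over $\mathcal{S}(P,\lambda_0)$ yields $\delta \leqslant \delta'$, which together with the first part gives $\delta = \delta'$.

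I do not expect a genuine obstacle here. The one point that needs care is that membership $\Delta P \in \mathcal{S}(P,\lambda_0)$ only guarantees that $\lambda_0$ is a \emph{multiple} eigenvalue of $P + \Delta P$, not that $P + \Delta P$ literally carries the elementary divisor $(\lambda - \lambda_0)^{r+1}$ — it may instead have several short Jordan chains at $\lambda_0$, or (when $P+\Delta P$ is singular) the rank drop of $T_\gamma$ may come from the singular part rather than from $\lambda_0$ at all. The passage from ``$\lambda_0$ has algebraic multiplicity at least $r+1$'' (or from singularity) to ``$P+\Delta P$ has an elementary divisor $(\lambda-\lambda_0)^j,\ j \geqslant r+1$'' must therefore go through the arbitrarily small additional perturbation provided by \cref{dist_is_zero}, which is precisely why the corollary is stated as an equality of infima rather than as an equality of feasible sets, and why the two parts of \cref{dist_is_zero} are invoked according to whether $P+\Delta P$ is regular or singular.
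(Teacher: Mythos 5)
Your proof is correct and follows the same route the paper has in mind: combine \cref{sigma_result} with \cref{dist_is_zero} and argue the two inequalities between the infima separately. The paper presents the corollary as ``immediate in view of part~(a) of \cref{dist_is_zero}'', but you are right to flag --- and to handle explicitly --- the fact that a $\Delta P \in \mathcal{S}(P,\lambda_0)$ may make $P+\Delta P$ singular, in which case the reverse implication of \cref{sigma_result} is not directly applicable and part~(b) of \cref{dist_is_zero} is what rescues the argument. Your observation that the forward implication of \cref{sigma_result} (Jordan chains $\Rightarrow$ rank drop) does not require regularity, whereas the converse does, is exactly the right dissection of why the corollary is an equality of infima rather than an equality of feasible sets; the paper glosses over this with the remark at the end of \cref{zerocase} about ``removing the uncertainty'' caused by perturbations producing singular polynomials. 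In short: same proof skeleton, but yours spells out a case distinction the paper leaves implicit.
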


\section{The distance as the reciprocal of a generalized $\mu$ value}

\Cref{cor_sigma_result} implies that for any given choice of norm, finding the distance from $P(\lambda)$ to a nearest matrix polynomial with an elementary divisor $\lambda^{r+1}$ is equivalent to finding the smallest structure preserving perturbation to the block Toeplitz matrix
$$\begin{bmatrix}
P(0)&&&&\\
P^\prime(0)&P(0)&&&\\
\frac{1}{2!}P^{\prime\prime}(0) &  P^\prime(0) & P(0)&&\\
\vdots&\vdots& \ddots&\ddots&\\
\frac{1}{r!}P^r(0) & \frac{1}{(r-1)!}P^{r-1}(0)  &\cdots & P^\prime(0) & P(0)
\end{bmatrix}$$
so that the rank of the perturbed matrix is at most $(r+1)(n-1).$ This fact is used in this section to show that if $\lambda_0 \in \C$ is not already an eigenvalue of $P(\lambda),$ then computing the distance from $P(\lambda)$ to a nearest matrix polynomial with the desired elementary divisor $(\lambda - \lambda_0)^j, j \geqslant r+1,$ with respect to the norms $\normp{P}_2$ and $\normp{P}_F$ is the reciprocal of a generalized notion of a $\mu$-value. To this end, the definition of a perturbation class and a structured singular value, which is also referred to in the literature as a $\mu$-value, are introduced.

A perturbation class $S$ is a nonempty closed subset of $\mathbb{C}^{p\times q}$ such that if $\Delta\in S$ then $t\Delta \in S$ for $0\leqslant t\leqslant 1.$

\begin{definition}[$\mu$-value]~\cite{PacD93, Kar03}
	Let $S \subset \mathbb{C}^{p\times q}$ be a perturbation class and let $\|.\|$ be a norm on $\mathbb{C}^{p\times q}$.
	The $\mu$-value of $M\in \mathbb{C}^{q\times p}$ with respect to $S$ and $\|.\|$ is
	\begin{equation}
	\mu_{S,\norm{.}}(M):=(\inf\{\norm{\Delta}:\Delta\in S, 1\in\sigma(\Delta M)\})^{-1}.
	\end{equation}
	If there is no such $\Delta\in S$, then $\mu_{S,\norm{.}}(M)=0$.
\end{definition}

The generalized $\mu$-value is now defined as follows.

\begin{definition}[Generalized $\mu$-value]
	Let $S \subset \mathbb{C}^{p\times q}$ be a perturbation class and let $\norm{.}$ be a norm on $\mathbb{C}^{p\times q}$.
	The generalized $\mu$-value of $M\in \mathbb{C}^{q\times p}$ with respect to $S$ and $\|.\|$ is defined as
	\begin{equation}
	\mu_{S,\norm{.}}^{r}(M):=(\inf\{\norm{\Delta}:\Delta\in S, \mathrm{rank}(I-\Delta M)\leqslant p-r\})^{-1}.
	\end{equation}
	If there is no such $\Delta\in S$, then $\mu_{S,\norm{.}}^r(M)=0$.
\end{definition}

Let \begin{equation}\label{blocktoep} T(Q,\lambda_0) := T_\gamma(Q,\lambda_0) \text{ with } \gamma = [1 \cdots 1].\end{equation} The following lemma provides an useful factorization of $T(Q,\lambda_0).$ The proof of the lemma follows from direct multiplication of the stated factors and is therefore skipped.

\begin{lemma}\label{fact_T_1}
	For a given positive integer $r$ and an $n\times n$ matrix polynomial $Q(\lambda)$ of degree $k$,
	$$T(Q,\lambda_0)=\left(I_{r+1}\otimes\begin{bmatrix}Q(\lambda_0)&Q'(\lambda_0)&\cdots&\frac{Q^{\min\{r,k\}}(\lambda_0)}{\min\{r,k\}!}\end{bmatrix}
	\right)E$$
	where $$E=\begin{cases}
	\begin{bmatrix}E_1^T&E_2^T&\cdots&E_{r+1}^T\end{bmatrix}^T\otimes I_n& \text{ if } r\leqslant k\\
	\begin{bmatrix}\tilde E_1^T&\tilde E_2^T&\cdots&\tilde E_{r+1}^T\end{bmatrix}^T\otimes I_n& \text{ if } r> k
	\end{cases}
	$$
	such that $E_i,$ $i=1,\dots,r,r+1$ are the $(r+1)\times (r+1)$ matrices,
	$$E_1=\begin{bmatrix}1&&&\\&&&\\&&&\\&&&\end{bmatrix}, E_2=\begin{bmatrix}&1&&\\1&&&\\&&&\\ &&&\end{bmatrix}, \dots, E_r=\scalemath{.85}{\begin{bmatrix}&&1&\\&\iddots&&\\1&&&\\&&& \end{bmatrix}},
	E_{r+1} =\scalemath{0.85}{\begin{bmatrix}&&&1\\&&1&\\&\iddots&&\\1&&&\end{bmatrix}}$$ and $\tilde E_i\in\mathbb{C}^{(k+1)\times (r+1)}$ are the first $k+1$ rows of $E_i$.
\end{lemma}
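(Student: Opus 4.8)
The plan is to verify the claimed identity directly, viewing every $(r+1)n\times(r+1)n$ matrix as an $(r+1)\times(r+1)$ array of $n\times n$ blocks. Write $p=\min\{r,k\}$, let $R:=\bigl[\,Q(\lambda_0)\ \ Q'(\lambda_0)\ \ \cdots\ \ \tfrac{Q^{p}(\lambda_0)}{p!}\,\bigr]$ be the single block row that occurs in the first factor, and let $e_1,\dots,e_{r+1}$ denote the standard basis of $\mathbb{C}^{r+1}$. In these terms the $(m,j)$ block of $T(Q,\lambda_0)$ is $\tfrac{1}{(m-j)!}Q^{(m-j)}(\lambda_0)$ for $j\le m$ and $0$ for $j>m$; the goal is to recover exactly this from the product on the right-hand side.

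First I would reduce the product to a computation of a single block row. Since $I_{r+1}\otimes R$ is block diagonal with $R$ repeated, and $E$ is by definition the vertical stack of the blocks $E_1\otimes I_n,\dots,E_{r+1}\otimes I_n$ (respectively $\tilde E_1\otimes I_n,\dots,\tilde E_{r+1}\otimes I_n$ when $r>k$), partitioning the columns of $I_{r+1}\otimes R$ and the rows of $E$ into $r+1$ conformal pieces gives $(I_{r+1}\otimes R)\,E=\sum_{m=1}^{r+1}A_m\,(E_m\otimes I_n)$, where $A_m$ is the matrix carrying $R$ in its $m$-th block row and zeros elsewhere. Hence the $m$-th block row of $(I_{r+1}\otimes R)E$ is $R\,(E_m\otimes I_n)$ while all other block rows of the individual summands vanish, so it suffices to show that $R\,(E_m\otimes I_n)$ equals the $m$-th block row of $T(Q,\lambda_0)$ for each $m$ (with $\tilde E_m$ in place of $E_m$ when $r>k$).

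Next I would read off the pattern of $E_m$ from the displayed matrices, namely $E_m=\sum_{a=1}^{\min\{m,\,p+1\}}e_a e_{m+1-a}^{T}$ --- ones along the $m$-th anti-diagonal --- with $\tilde E_m$ obtained by discarding all rows beyond the $(k+1)$-st, which is precisely the cap $a\le k+1=p+1$. Using $(e_ae_b^{T})\otimes I_n=(e_a\otimes I_n)(e_b\otimes I_n)^{T}$, the identity $R\,(e_a\otimes I_n)=\tfrac{1}{(a-1)!}Q^{(a-1)}(\lambda_0)$ (selection of the $a$-th block of $R$), and the fact that $(e_b\otimes I_n)^{T}$ inserts an $I_n$ in block column $b$, one finds that the $j$-th block of $R\,(E_m\otimes I_n)$ is the unique surviving term with $m+1-a=j$, i.e.\ $\tfrac{1}{(m-j)!}Q^{(m-j)}(\lambda_0)$ when $1\le j\le m$ (and, when $r>k$, additionally $m-j\le k$), and $0$ otherwise. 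This is exactly the $(m,j)$ block of $T(Q,\lambda_0)$, and assembling over $m$ and $j$ finishes the proof.

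The only point requiring care --- and the closest thing to an obstacle --- is the bookkeeping in the two regimes $r\le k$ and $r>k$: one must verify that truncating each $E_m$ to $\tilde E_m$ exactly matches the loss of block columns of $R$ (which has $p+1=k+1$ rather than $r+1$ block columns when $r>k$), so that the extra constraint $m-j\le k$ is harmless because $Q^{(m-j)}(\lambda_0)=0$ as soon as $m-j>\deg Q=k$, and thus no genuinely nonzero derivative block is dropped and no spurious block is created.
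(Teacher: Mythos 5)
Your proposal is correct, and it is precisely the "direct multiplication of the stated factors" that the paper invokes and then skips: you identify each block row of $(I_{r+1}\otimes R)E$ with $R(E_m\otimes I_n)$, decompose $E_m$ into its anti-diagonal rank-one summands, and read off the $(m,j)$ block as $\tfrac{1}{(m-j)!}Q^{(m-j)}(\lambda_0)$ for $j\le m$ (and $0$ otherwise), with the $r>k$ truncation harmless because $Q^{(m-j)}(\lambda_0)=0$ once $m-j>k$. This matches the paper's intended (but unwritten) argument.
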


The next theorem is the main result of this section.

\begin{theorem}\label{dist_as_mv} Let $P(\lambda)=\sum_{i=0}^{k}\lambda^i A_i$ be an $n \times n$ matrix polynomial of degree $k,$ and $1 \leqslant r < kn.$
	For $\Delta A_i \in \C^{n \times n}, i = 0, \dots, k,$ let $S_1$ be the perturbation class of all perturbations of the type $\left(I_{r+1}\otimes\begin{bmatrix}\Delta A_0&\cdots&\Delta A_k\end{bmatrix}\right)$ and $S_2$ be the perturbation class of all perturbations of the type $\left(I_{r+1}\otimes\begin{bmatrix}\Delta A_0&\cdots&\Delta A_{\min\{r,k\}}\end{bmatrix}
	\right).$
	For any $\lambda_0 \in \C$ which is not an eigenvalue of $P(\lambda),$ let $T(P,\lambda_0)$ be defined by~\eqref{blocktoep} and $E$ and $M(\lambda_0;r)$ be as given in
	\cref{fact_T_1} and \cref{z_to_nz} respectively. Then,
	$$\delta_2(P,\lambda_0,r+1)= \left\{\begin{array}{ll} \left[\mu_{S_1,\norm{.}_{2}}^{r+1}\left((I_{r+1}\otimes M(\lambda_0;r))E~(T(P,\lambda_0))^{-1}\right)\right]^{-1} & \text{ if } \lambda_0 \neq 0, \\
	\left[\mu_{S_2,\norm{.}_{2}}^{r+1}\left(E~(T(P,0))^{-1}\right)\right]^{-1} & \text{ otherwise, } \end{array}\right.$$
	and
	$$\delta_F(P,\lambda_0,r+1)= \left\{\begin{array}{ll} \frac{\left[\mu_{S_1,\norm{.}_{F}}^{r+1}\left((I_{r+1}\otimes M(\lambda_0;r))E~(T(P,\lambda_0))^{-1}\right)\right]^{-1}}{\sqrt{r+1}} & \text{ if } \lambda_0 \neq 0, \\
	\frac{\left[\mu_{S_2,\norm{.}_{F}}^{r+1}\left(E~(T(P,0))^{-1}\right)\right]^{-1}}{\sqrt{r+1}} & \text{ otherwise. }
	\end{array}\right.$$
\end{theorem}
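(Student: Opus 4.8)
The plan is to reduce the distance computation to the generalized $\mu$-value by unwinding the characterization from \cref{cor_sigma_result} and exploiting the factorization of $T(Q,\lambda_0)$ from \cref{fact_T_1}. First consider the case $\lambda_0 \neq 0$. By \cref{cor_sigma_result}, $\delta_s(P,\lambda_0,r+1)$ is the infimum of $\normp{\Delta P}_s$ over all $\Delta P$ for which $T(P+\Delta P,\lambda_0)$ has rank at most $(r+1)(n-1)$, equivalently nullity at least $r+1$. Using \cref{z_to_nz} and \cref{fact_T_1} applied to $Q = P + \Delta P$, write
$$T(P+\Delta P,\lambda_0) = \bigl(I_{r+1}\otimes [B_0 \cdots B_k]\bigr)\bigl(I_{r+1}\otimes M(\lambda_0;r)\bigr) E,$$
where $B_i = A_i + \Delta A_i$. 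Splitting off the unperturbed part gives $T(P+\Delta P,\lambda_0) = T(P,\lambda_0) + \Delta T$ where $\Delta T = \bigl(I_{r+1}\otimes[\Delta A_0 \cdots \Delta A_k]\bigr)\bigl(I_{r+1}\otimes M(\lambda_0;r)\bigr)E$. Denoting the perturbation block by $\Delta := I_{r+1}\otimes[\Delta A_0 \cdots \Delta A_k] \in S_1$, the rank condition becomes
$$\rank\!\left(T(P,\lambda_0) + \Delta\,(I_{r+1}\otimes M(\lambda_0;r))E\right) \leqslant (r+1)(n-1).$$

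Next, since $\lambda_0$ is not an eigenvalue of $P(\lambda)$, the matrix $P(\lambda_0)$ is invertible, hence the block lower-triangular Toeplitz matrix $T(P,\lambda_0)$ is invertible. Factor $T(P,\lambda_0)$ out on the right: the rank condition is equivalent to
$$\rank\!\left(I + \Delta\,(I_{r+1}\otimes M(\lambda_0;r))E\,(T(P,\lambda_0))^{-1}\right) \leqslant (r+1)(n-1) = (r+1)n - (r+1).$$
Setting $M := (I_{r+1}\otimes M(\lambda_0;r))E\,(T(P,\lambda_0))^{-1}$ and recalling that here $p = (r+1)n$, this is exactly the constraint $\rank(I - (-\Delta)M) \leqslant p - (r+1)$ appearing in the definition of the generalized $\mu$-value. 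Since $S_1$ is closed under negation (it is a linear subspace intersected with a product structure), $\Delta \in S_1 \iff -\Delta \in S_1$, so taking the infimum over $\Delta$ yields $\inf\{\norm{\Delta}_s : \Delta \in S_1,\ \rank(I-\Delta M)\leqslant p-(r+1)\} = [\mu^{r+1}_{S_1,\norm{.}_s}(M)]^{-1}$. The final step is to relate the norm on the perturbation block $\Delta$ to $\normp{\Delta P}_s$. For the $2$-norm, $\norm{I_{r+1}\otimes[\Delta A_0 \cdots \Delta A_k]}_2 = \norm{[\Delta A_0 \cdots \Delta A_k]}_2 = \normp{\Delta P}_2$, since the spectral norm is invariant under Kronecker product with an identity. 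For the Frobenius norm, $\norm{I_{r+1}\otimes[\Delta A_0 \cdots \Delta A_k]}_F = \sqrt{r+1}\,\norm{[\Delta A_0 \cdots \Delta A_k]}_F = \sqrt{r+1}\,\normp{\Delta P}_F$, which accounts for the $\sqrt{r+1}$ factor in the Frobenius formulas. The case $\lambda_0 = 0$ is identical except that $M(\lambda_0;r)$ does not appear (only the first $\min\{r,k\}+1$ block columns of the coefficient list are relevant, so one uses the perturbation class $S_2$ and $M = E(T(P,0))^{-1}$ directly).

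The main obstacle I anticipate is the bookkeeping in verifying that the perturbation classes $S_1$ and $S_2$ are precisely the right "shape" — in particular, checking that constraining $\Delta$ to have the block-diagonal Kronecker structure $I_{r+1}\otimes[\cdot]$ exactly corresponds to structure-preserving perturbations of the block Toeplitz matrix (i.e., that every admissible rank-reducing perturbation of $T(P,\lambda_0)$ that is itself a valid Toeplitz-structured matrix polynomial perturbation arises this way, and conversely). One must confirm that the factorization $\Delta T = \Delta\,(I_{r+1}\otimes M(\lambda_0;r))E$ genuinely ranges over all Toeplitz perturbations induced by coefficient perturbations as $\Delta$ ranges over $S_1$, with no loss — this is where \cref{fact_T_1} and \cref{z_to_nz} do the heavy lifting, and the $r \leqslant k$ versus $r > k$ dichotomy in \cref{fact_T_1} must be carried through consistently. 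The norm-equivalence and the negation-closure of the perturbation classes are routine once the structural correspondence is pinned down.
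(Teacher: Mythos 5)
Your proposal is correct and follows essentially the same route as the paper: both start from \cref{cor_sigma_result}, apply \cref{fact_T_1} and \cref{z_to_nz} to factor $T(\Delta P,\lambda_0)$, peel off the invertible $T(P,\lambda_0)$ to obtain the constraint $\rank(I-\Delta M)\leqslant (r+1)n-(r+1)$, and close with the Kronecker-product norm identities $\norm{I_{r+1}\otimes X}_2=\norm{X}_2$ and $\norm{I_{r+1}\otimes X}_F=\sqrt{r+1}\,\norm{X}_F$ to account for the $\sqrt{r+1}$ in the Frobenius formulas. The minor differences — you factor $T(P+\Delta P,\lambda_0)$ before splitting off the unperturbed part while the paper isolates $T(\Delta P,\lambda_0)T(P,\lambda_0)^{-1}$ first, and you absorb the sign by negation-closure of $S_1$ rather than by tracking $-\Delta P\in\mathcal{S}(P,\lambda_0)$ — are cosmetic reorganizations of the same argument.
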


\begin{proof} Recall that $S(P,\lambda_0)$ is the collection of all $n \times n$ matrix polynomials $\Delta P(\lambda)=\sum_{i=0}^{k} \lambda^i \Delta A_i$ such that $T(P + \Delta P, \lambda_0)$ has rank at most $(r+1)(n-1).$ As $P(\lambda_0)$ is invertible, $-\Delta P(\lambda) \in S(P,\lambda_0)$ if and only if
	\begin{equation}
	\mathrm{nullity}(I-T(\Delta P,\lambda_0)T(P,\lambda_0)^{-1})\geqslant r+1. \label{main_rel_mu}
	\end{equation}
	Due to \cref{fact_T_1} the above relation may be written as,
	\begin{equation}
	\mathrm{nullity}\left(
	I-\left(I_{r+1}\otimes\begin{bmatrix}\Delta P(\lambda_0)&\Delta P'(\lambda_0)&\cdots&\frac{\Delta P^{p}(\lambda_0)}{p!}\end{bmatrix}
	\right)
	E~(T(P,\lambda_0))^{-1}\right)\geqslant r+1.\label{rel_1_req_for_0}
	\end{equation}
	where $p = \min\{r,k\}.$
	By \cref{z_to_nz} this is equivalent to
	$$\mathrm{nullity}\left(
	I-\left(I_{r+1}\otimes\begin{bmatrix}\Delta A_0&\Delta A_1&\cdots&\Delta A_k\end{bmatrix}M(\lambda_0;r)
	\right)
	E~(T(P,\lambda_0))^{-1}\right)\geqslant r+1,$$
	which may also be written as
	\begin{equation}
	\mathrm{nullity}\left(
	I-\left(I_{r+1}\otimes[\Delta A_0 \cdots \Delta A_k]\right)\left( I_{r+1}\otimes M(\lambda_0;r)
	\right)
	E~(T(P,\lambda_0))^{-1}\right)\geqslant r+1 \label{dist_jchain_mu_1}.
	\end{equation}
	Then
	$$\inf\{\normp{\Delta P}_2| \Delta P(\lambda)\in S(P,\lambda_0)\}=\left[\mu_{S_1,\norm{.}_{2}}^{r+1}\left((I_{r+1}\otimes M(\lambda_0;r))E~(T(P,\lambda_0))^{-1}\right)\right]^{-1}.$$
	Similarly,
	\begin{equation*}\inf\{\normp{\Delta P}_F| \Delta P(\lambda)\in S(P,\lambda_0)\}=\frac{\left[\mu_{S_1,\norm{.}_{F}}^{r+1}\left((I_{r+1}\otimes M(\lambda_0;r))E~(T(P,\lambda_0))^{-1}\right)\right]^{-1}}{\sqrt{r+1}}.\end{equation*}
	The proof for the case $\lambda_0 \neq 0$ now follows from \cref{cor_sigma_result}.
	
	If $\lambda_0=0$, then the equation \cref{rel_1_req_for_0} will be of the form
	\begin{equation}\label{dist_jchain_mu_2}\mathrm{nullity}\left(
	I-\left(I_{r+1}\otimes\begin{bmatrix}\Delta A_0 & \cdots &\Delta A_p\end{bmatrix}
	\right)
	E~(T(P,0))^{-1}\right)\geqslant r+1.\end{equation}
	Then setting $\Delta A_i = 0$ for $i = r+1, \ldots, k$ if $r < k,$
	\begin{eqnarray*} \inf\{\normp{\Delta P}_2| \Delta P(\lambda)\in S(P,0)\} & = & \left[\mu_{S_2,\norm{.}_{2}}^{r+1}\left(E~(T(P,0))^{-1}\right)\right]^{-1} \text{ and }\\
		\inf\{\normp{\Delta P}_F| \Delta P(\lambda)\in S(P,0)\} & = & \frac{\left[\mu_{S_2,\norm{.}_{F}}^{r+1}\left(E~(T(P,0))^{-1}\right)\right]^{-1}}{\sqrt{r+1}}.\end{eqnarray*}
	Hence the proof follows from \cref{cor_sigma_result}. \end{proof}

\section{An alternative formulation of the distance as an optimization}

%
An alternative formulation for the distance $\delta_s(P,\lambda_0,r+1)$ is obtained in this section for $s =2$ or $F.$

\begin{theorem}\label{form_delta_F}
	Let $P(\lambda) = \sum_{i = 0}^k \lambda^iA_i$ be an $n \times n$ matrix polynomial of degree $k.$ For a given integer $r,$ such that $0 < r < kn,$ consider the sets
	\begin{eqnarray}
	\Gamma & := & \{[\gamma_1 \cdots \gamma_r]: \gamma_i > 0, 1 \leqslant i \leqslant r\}, \mbox{ and } \label{gamma}\\
	\mathbb{C}_0^{(r+1)n} & := &\{[x_0^T \cdots x_r^T]^T: x_i \in \mathbb{C}^n, i = 0,  \ldots, r, x_0 \neq 0\}. \label{c0rn}
	\end{eqnarray}
	Now let $\C_{\mathcal T, \Gamma}^{r,n}$ be the collection of all block Toeplitz like matrices $X$ given by,
	$$X= \left\{\begin{array}{ll}\scalemath{0.85}{\begin{bmatrix}x_0&x_1&x_2&\cdots&x_r\\0&\gamma_1x_0&\gamma_2x_1&\cdots&\gamma_rx_{r-1}\\0&0&\gamma_1\gamma_2{x_0}&\cdots&\gamma_{r-1}\gamma_r x_{r-2}\\\vdots&\vdots&\vdots&\vdots&\vdots\\0 &0&0&0&\gamma_1\gamma_2\dots\gamma_r{x_0}\end{bmatrix}} & \text{ if } r \leqslant k, \\
	\scalemath{0.85}{\begin{bmatrix}x_0&x_1&x_2&\cdots&x_k&\cdots&x_r\\0&\gamma_1x_0&\gamma_2x_1&\cdots&\gamma_kx_{k-1}&\cdots&\gamma_rx_{r-1}\\0&0&\gamma_1\gamma_2{x_0}&\cdots&\gamma_{k-1}\gamma_k{x_{k-2}}&\cdots&\gamma_{r-1}\gamma_r{x_{r-2}}\\\vdots&\vdots&\vdots&\vdots&\vdots&\vdots&\vdots\\0 &0&0&0&\displaystyle{\prod_{i = 1}^k\gamma_i} {x_0}&\cdots& \displaystyle{\prod_{i = r-k+1}^r\gamma_i}{x_{r-k}}\end{bmatrix}} & \text{ otherwise,}
	\end{array} \right.$$
	where $[x_0^T \cdots x_r^T]^T \in \mathbb{C}_0^{(r+1)n}$ and $[\gamma_1 \cdots \gamma_r] \in \Gamma.$ Then for $s=2$ or $F$
	\begin{equation}\label{label1}
	\delta_s(P,0,r+1)  = \left\{\begin{array}{ll} \displaystyle\inf_{X \in \C_{\mathcal T,\Gamma}^{r,n}}\norm{\begin{bmatrix}A_0&\cdots&A_r\end{bmatrix}XX^\dagger}_s & \text{ if } r \leqslant k, \\
	\displaystyle\inf_{X \in \C_{\mathcal T,\Gamma}^{r,n}}\norm{\begin{bmatrix}A_0&\cdots&A_k\end{bmatrix}XX^\dagger}_s & \text{ otherwise.}\end{array}\right.
	\end{equation}
	and in general,
	\begin{equation}\label{label2}
	\scalemath{0.9}{\delta_s(P,\lambda_0,r+1)  = \left\{ \begin{array}{ll} \displaystyle\inf_{X \in \C_{\mathcal T,\Gamma}^{r,n}}\norm{\begin{bmatrix}P(\lambda_0)&\cdots&\frac{1}{r!}P^r(\lambda_0)\end{bmatrix}X(M(\lambda_0;r)X)^\dagger}_s & \text{ if } r \leqslant k, \\
		\displaystyle\inf_{X \in \C_{\mathcal T,\Gamma}^{r,n}}\norm{\begin{bmatrix}P(\lambda_0)&\cdots&\frac{1}{k!}P^k(\lambda_0)\end{bmatrix}X(M(\lambda_0;r)X)^\dagger}_s & \text{ otherwise.} \end{array}\right.}
	\end{equation} where $M(\lambda_0;r)$ is as given in \cref{z_to_nz}.
\end{theorem}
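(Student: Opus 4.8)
The plan is to reduce the distance to a rank-deficiency condition on the block Toeplitz matrix $T(P+\Delta P,\lambda_0)$ and then to re-express that rank condition in terms of the auxiliary matrix $X \in \C_{\mathcal T,\Gamma}^{r,n}$. First I would invoke \cref{sigma_result} (or, equivalently, \cref{cor_sigma_result}) to say that $P+\Delta P$ has the elementary divisor $(\lambda-\lambda_0)^j,\ j \geqslant r+1$, if and only if the nullity of $T_\gamma(P+\Delta P,\lambda_0)$ is at least $r+1$ \emph{for any fixed choice of} $\gamma \in \Gamma$ (the rank is independent of $\gamma$ since $T_\gamma$ is obtained from $T_{[1\cdots 1]}$ by conjugating with invertible block-diagonal scalings). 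This gives the freedom to optimize over $\gamma$. The key observation is that a nullity of at least $r+1$ for $T_\gamma(P+\Delta P,\lambda_0)$ is equivalent to the existence of a matrix $X$ of the block-Toeplitz-like form displayed in the statement — built from a Jordan-chain-type vector $[x_0^T \cdots x_r^T]^T \in \mathbb{C}_0^{(r+1)n}$ and parameters $\gamma$ — whose columns all lie in $N(T_\gamma(P+\Delta P,\lambda_0))$. Indeed, examining \eqref{tgamma}, the condition $T_\gamma(P+\Delta P,\lambda_0)\,(\text{$j$-th column of }X)=0$ unravels, row by row, into the Jordan chain equations $\sum_{i=0}^{p}\frac{1}{i!}(P+\Delta P)^{(i)}(\lambda_0)\,x_{p-i}=0$, $p=0,\dots,r$; this is exactly the $r\leqslant k$ versus $r>k$ dichotomy reflected in the two forms of $X$.

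Next I would rewrite the Jordan chain equations as a linear system in the \emph{perturbation} coefficients. Using \cref{z_to_nz}, the block row $[(P+\Delta P)(\lambda_0)\ \cdots\ \tfrac{1}{p!}(P+\Delta P)^{(p)}(\lambda_0)] = [A_0+\Delta A_0\ \cdots\ A_k+\Delta A_k]\,M(\lambda_0;r)$, so the full system "$T_\gamma(P+\Delta P,\lambda_0)X = 0$" becomes
\begin{equation*}
\bigl[A_0+\Delta A_0\ \cdots\ A_k+\Delta A_k\bigr]\,M(\lambda_0;r)\,X = 0,
\end{equation*}
where in the $r\leqslant k$ case $M(\lambda_0;r)X$ has $(r+1)$ block rows and $A_{r+1},\dots,A_k$ (together with the corresponding $\Delta A_i$) simply do not appear. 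Setting $B:=[\Delta A_0\ \cdots\ \Delta A_k]$ (truncated appropriately), the constraint reads $B\,(M(\lambda_0;r)X) = -[A_0\ \cdots]\,M(\lambda_0;r)X$. The infimum of $\normp{\Delta P}_s$ subject to this linear constraint in $B$ is a standard least-norm problem: for fixed $X$ and $\gamma$ the minimal-norm $B$ is obtained by right multiplication with the Moore–Penrose pseudoinverse, giving $\|[A_0\ \cdots]\,M(\lambda_0;r)X\,(M(\lambda_0;r)X)^\dagger\|_s$ for the $2$- and Frobenius norms. (One must check that $\normp{\Delta P}_s$ — which is the norm of the \emph{stacked} coefficient matrix $[\Delta A_0 \cdots \Delta A_k]$ — is exactly $\|B\|_s$ for these two norms; this is immediate from the definitions of $\normp{\cdot}_2$ and $\normp{\cdot}_F$ in \cref{basic}, and is the reason the two norms, and no others, appear.) Specializing to $\lambda_0=0$, where $M(0;r)$ acts as the identity on the relevant blocks and $M(\lambda_0;r)X$ collapses to $X$ itself, yields \eqref{label1}; the general $\lambda_0$ case yields \eqref{label2}.

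Finally I would take the infimum over the remaining free data: the Jordan-chain vector $[x_0^T\cdots x_r^T]^T \in \mathbb{C}_0^{(r+1)n}$ (the constraint $x_0\neq 0$ is exactly what guarantees a genuine chain, i.e.\ a bona fide elementary divisor rather than a spurious one), and the parameters $\gamma \in \Gamma$ — both of which are encoded in $X$ ranging over $\C_{\mathcal T,\Gamma}^{r,n}$. Combining with \cref{cor_sigma_result} closes the argument. I expect the main obstacle to be the careful verification of the equivalence in the second step: one direction (given such an $X$ with columns in the null space, the corresponding $\Delta P$ works) is the straightforward "sufficiency" direction handled as in the first half of the proof of \cref{sigma_result}, but the other direction requires showing that whenever $\mathrm{nullity}(T_\gamma(P+\Delta P,\lambda_0))\geqslant r+1$ one can, after the column operations and basis manipulations already carried out in the proof of \cref{sigma_result}, extract a single chain $[x_0^T \cdots x_r^T]^T$ with $x_0\neq 0$ that generates a block-Toeplitz-like $X$ of the prescribed shape — so that optimizing over $X$ loses nothing. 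Handling the bookkeeping of the $r\leqslant k$ versus $r>k$ truncation uniformly, and confirming that the pseudoinverse least-norm formula is valid (rank and consistency of the linear system for the optimal $X$), are the remaining technical points.
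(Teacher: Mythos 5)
Your overall plan is the same as the paper's --- reduce the elementary-divisor condition to a Jordan chain, recast the chain as a linear constraint on the stacked perturbation coefficients, solve the resulting least-norm problem by a Moore--Penrose pseudoinverse, and take the infimum over the free data. The pseudoinverse step and the $\lambda_0=0$ versus $\lambda_0\neq 0$ bookkeeping are all handled correctly. However, the ``key observation'' that drives your reduction is wrong, and this is a genuine gap rather than a cosmetic one.

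You assert that nullity of $T_\gamma(P+\Delta P,\lambda_0)$ at least $r+1$ is equivalent to the existence of an $X\in\C_{\mathcal T,\Gamma}^{r,n}$ \emph{all of whose columns lie in} $N(T_\gamma(P+\Delta P,\lambda_0))$, and you then write the ``full system $T_\gamma(P+\Delta P,\lambda_0)X=0$.'' Neither of these is true. The first column of $X$ is $[x_0^T\ 0^T\ \cdots\ 0^T]^T$; applying the second block row of $T_\gamma(P+\Delta P,\lambda_0)$ to it produces $\gamma_1 (P+\Delta P)'(\lambda_0)x_0$, which has no reason to vanish even when $x_0,\dots,x_r$ is a genuine Jordan chain. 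More strongly, demanding $T_\gamma(P+\Delta P,\lambda_0)X=0$ forces $(P+\Delta P)^{(i)}(\lambda_0)x_0=0$ for every $i$ (read the first column), which is a far more restrictive condition than a chain of length $r+1$; an optimization over perturbations satisfying it would only give an upper bound on the distance, not an equality. What is actually true, and what the paper uses, is this: the single vector $[x_0^T\ \cdots\ x_r^T]^T$ lies in $N(T_\gamma(P+\Delta P,\lambda_0))$ if and only if
$$\begin{bmatrix}(P+\Delta P)(\lambda_0) & (P+\Delta P)'(\lambda_0) & \cdots & \tfrac{1}{p!}(P+\Delta P)^{(p)}(\lambda_0)\end{bmatrix}X = 0,\qquad p=\min\{r,k\},$$
that is, a \emph{single block row} (of width $(p+1)n$) annihilates $X$, not the whole $T_\gamma$. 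The lower-triangular shape of $T_\gamma$ and the upper-triangular shape of $X$ conspire so that the $(j+1)$-st Jordan chain equation becomes precisely the product of that block row with the $(j+1)$-st column of $X$. Once this is the equation, your subsequent rewrite as $\bigl[\Delta A_0\ \cdots\ \Delta A_k\bigr]M(\lambda_0;r)X = -\bigl[A_0\ \cdots\ A_k\bigr]M(\lambda_0;r)X$ and the least-norm argument go through exactly as you outlined, and the proof closes.

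Two smaller points. First, the paper does not route through \cref{sigma_result} and nullity at all in this proof: it works directly from the equivalence ``$(\lambda-\lambda_0)^j,\ j\geqslant r+1$ is an elementary divisor $\Leftrightarrow$ a length-$(r+1)$ Jordan chain with $x_0\neq 0$ exists,'' which is cleaner here because nullity $\geqslant r+1$ only gives a \emph{total} chain length of $r+1$ (possibly split among several chains), so the extraction of a single long chain that you flag as a technical point is precisely the extra work one avoids by not invoking the nullity characterization. Second, your remark that the $\gamma_i$ only affect the optimization (since $T_\gamma$ is a diagonal conjugate of $T_{[1\cdots 1]}$) is correct and consistent with the paper's Remark 6.2, but it is not used in the proof; the $\gamma_i$ are simply carried along as part of the parametrization of $X$.
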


\begin{proof} Initially consider the case that $r \leqslant k.$
	Let $\Delta P(\lambda) = \sum_{i=0}^k \lambda^i \Delta A_i$ be any $n \times n$ matrix polynomial of degree $k$ such that $P(\lambda)+\Delta P(\lambda)$
	is a regular matrix polynomial. Then $P(\lambda)+\Delta P(\lambda)$ has an elementary divisor $(\lambda-\lambda_0)^j$ where $j\geqslant r+1$ if and only if there exists vectors $x_0,x_1,\dots,x_r\in\mathbb{C}^n$ with $x_0\neq 0$ and $r$ positive scalars $\gamma_1,\dots,\gamma_r$ such that
	\begin{align*}
	&(P+\Delta P)(\lambda_0)x_0=0\\
	&(P+\Delta P)(\lambda_0)x_1+\gamma_1(P+\Delta P)^\prime(\lambda_0))x_0=0\\
	&(P+\Delta P)(\lambda_0)x_2+\gamma_2(P+\Delta P)^\prime(\lambda_0)x_1+\gamma_1\gamma_2\frac{(P+\Delta P)^{\prime\prime}(\lambda_0)}{2!}x_0=0\\
	&\dots\dots\dots\\
	&(P+\Delta P)(\lambda_0)x_r+\gamma_r(P+\Delta P)^\prime(\lambda_0)x_{r-1}+\dots+\displaystyle{\prod_{i = 1}^r}\gamma_i\frac{(P+\Delta P)^{r}(\lambda_0)}{r!}x_0=0.
	\end{align*}
	This is equivalent to
	$$\scalemath{0.82}{\begin{bmatrix}
		(P + \Delta P)(\lambda_0)&&&&\\
		\gamma_1 (P + \Delta P)^\prime(\lambda_0)& (P + \Delta P)(\lambda_0)&&&\\
		\frac{\gamma_1 \gamma_2}{2!}(P + \Delta P)^{\prime\prime}(\lambda_0)&\gamma_2 (P + \Delta P)^\prime(\lambda_0)& (P + \Delta P)(\lambda_0)&&\\
		\vdots&\vdots&&\ddots&\\
		\frac{\gamma_1\cdots\gamma_r}{r!} (P + \Delta P)^r(\lambda_0) & \frac{\gamma_2\cdots\gamma_r}{(r-1)!}
		(P + \Delta P)^{r-1}(\lambda_0)&\cdots&\cdots& (P + \Delta P)(\lambda_0)
		\end{bmatrix}
		\begin{bmatrix}x_0\\x_{1}\\\vdots\\x_{r-1}\\x_r\end{bmatrix}=0},$$
	which can be written in the form
	\begin{equation}\label{maineq}
	\scalemath{0.9}{ \begin{bmatrix}\Delta P(\lambda_0) & \Delta P^\prime(\lambda_0) &\cdots&\frac{1}{r!}\Delta P^r(\lambda_0)\end{bmatrix}
		X=-\begin{bmatrix} P(\lambda_0) & P^\prime(\lambda_0) &\cdots&\frac{1}{r!}P^r(\lambda_0)\end{bmatrix}X},
	\end{equation}
	where $$X=\begin{bmatrix}
	x_0&x_1&x_2&\cdots&x_r\\
	0&\gamma_1x_0&\gamma_2x_1&\cdots&\gamma_rx_{r-1}\\
	0&0&\gamma_1\gamma_2{x_0}&\cdots&\gamma_{r-1}\gamma_rx_{r-1}\\
	\vdots&\vdots&\vdots&\vdots&\vdots\\
	0&0&0&\cdots&\gamma_1\dots\gamma_r x_0
	\end{bmatrix} \in \C_{\mathcal{T},\gamma}^{r,n}.$$
	When $\lambda_0 = 0,$ \eqref{maineq} takes the form
	\begin{equation}\label{maineq_at_zero}
	\begin{bmatrix}\Delta A_0 & \Delta A_1 &\cdots&\Delta A_r\end{bmatrix}
	X=-\begin{bmatrix} A_0 & A_1 &\cdots&A_r\end{bmatrix}X.
	\end{equation}
	By~\cite[Lemma 1.3]{Sun93}, the minimum $2$ or Frobenius norm solution of this equation is given by
	\begin{equation}\label{min_sol_zero} \begin{bmatrix}\Delta A_0&\Delta A_1 &\cdots&\Delta A_r\end{bmatrix}=-\begin{bmatrix}A_0&A_1&\cdots&A_r\end{bmatrix}XX^\dagger.\end{equation}
	Setting $\Delta A_i = 0$ for $i = r+1, \ldots, k$ if $r < k,$ $(P+\Delta P)(\lambda)$ has an elementary divisor $\lambda^j, j \geq r+1.$ Therefore in this case, $\delta_s(P,0,r+1)=\displaystyle\inf_{X \in \C_{\mathcal{T},\Gamma}^{r,n}}\norm{\begin{bmatrix}A_0&\cdots&A_r\end{bmatrix}XX^\dagger}_s,$
	for $s=2$ or $F$. When $\lambda_0 \neq 0,$ \cref{z_to_nz} implies that
	$$\begin{bmatrix}\Delta P(\lambda_0) & \Delta P^\prime(\lambda_0) &\cdots&\frac{1}{r!}\Delta P^r(\lambda_0)\end{bmatrix} =
	\begin{bmatrix}\Delta A_0&\cdots&\Delta A_k\end{bmatrix}M(\lambda_0;r).$$
	Using this in \eqref{maineq}, the minimum $2$ or Frobenius norm solution of the resulting equation is given by
	$$\begin{bmatrix}\Delta A_0&\cdots&\Delta A_k\end{bmatrix}=-\begin{bmatrix} P(\lambda_0) & P^\prime(\lambda_0) &\cdots&\frac{1}{r!}P^r(\lambda_0)\end{bmatrix}X(M(\lambda_0;r)X)^\dagger,$$ thus proving that if $r \leqslant k$ then for $s=2$ or $F,$ $$\delta_s(P,\lambda_0,r+1)=\displaystyle\inf_{X \in \C_{\mathcal{T},\Gamma}^{r,n}}\norm{\begin{bmatrix} P(\lambda_0) & P^\prime(\lambda_0) &\cdots&\frac{1}{r!}P^r(\lambda_0)\end{bmatrix}X(M(\lambda_0;r)X)^\dagger}_s.$$
	
	If $r > k,$ then $P(\lambda)+\Delta P(\lambda)$ has an elementary divisor $(\lambda - \lambda_0)^j$ where $j\geqslant r+1$ iff there exists vectors $x_0,x_1,\dots,x_r\in\mathbb{R}^n$ with $x_0\neq 0$ and $r$ positive scalars $\gamma_1,\dots,\gamma_r$ such that
	\begin{align*}
	&\scalemath{0.85}{(P + \Delta P)(\lambda_0)x_0=0}\\
	&\scalemath{0.85}{(P +\Delta P)(\lambda_0)x_1+\gamma_1(P+\Delta P)^\prime(\lambda_0)x_0=0}\\
	&\scalemath{0.85}{(P+\Delta P)(\lambda_0)x_2+\gamma_2(P+\Delta P)^\prime(\lambda_0)x_1+\gamma_1\gamma_2\frac{(P+\Delta P)^{\prime\prime}(\lambda_0)}{2!}x_0=0}\\
	&\dots\dots\dots\\
	&\scalemath{0.85}{(P+\Delta P)(\lambda_0)x_k+\gamma_k(P+\Delta P)^\prime(\lambda_0)x_{k-1}+\dots+\displaystyle{\prod_{i = 1}^k}\gamma_i\frac{(P+\Delta P)^{k}(\lambda_0)}{k!}x_0=0}\\
	&\dots\dots\dots\\
	&\scalemath{0.85}{(P+\Delta P)(0)x_r+\gamma_r(P+\Delta P)^\prime(\lambda_0)x_{r-1}+\dots+\displaystyle{\prod_{i = r-k+1}^r}\gamma_i\frac{(P+\Delta P)^{k}(\lambda_0)}{k!}x_{r-k}=0.}
	\end{align*}
	This set of equations can be written in the form
	$$\scalemath{0.8}{\begin{bmatrix}
		(P + \Delta P)(\lambda_0)&&&&&\\
		\gamma_1 (P + \Delta P)^\prime(\lambda_0) & (P + \Delta P)(\lambda_0) &&&&\\
		\vdots&\vdots&\ddots&&\\
		\frac{\gamma_1 \cdots \gamma_k}{k!}(P + \Delta P)^k(\lambda_0)&&& (P + \Delta P)(\lambda_0)&&\\
		&\ddots&&&\ddots&\\
		&&&\frac{\gamma_{r-k+1} \cdots \gamma_r}{k!}(P + \Delta P)^k(\lambda_0)&\cdots & (P + \Delta P)(\lambda_0)
		\end{bmatrix}}
	\scalemath{0.8}{\begin{bmatrix}x_0\\x_{1}\\\vdots\\x_k\\\vdots\\x_r\end{bmatrix}=0.}$$
	This is equivalent to
	$$\scalemath{0.9}{\begin{bmatrix}\Delta P(\lambda_0) & \Delta P^\prime(\lambda_0) &\cdots&\frac{1}{k!}\Delta P^k(\lambda_0)\end{bmatrix}
		X=-\begin{bmatrix} P(\lambda_0) & P^\prime(\lambda_0) &\cdots&\frac{1}{k!}P^k(\lambda_0)\end{bmatrix}X},$$
	where $$X=\begin{bmatrix}x_0&x_1&x_2&\cdots&x_k&\dots&x_r\\0&\gamma_1x_0&\gamma_2x_1&\cdots&\gamma_kx_{k-1}&\cdots&\gamma_rx_{r-1}\\0&0&\gamma_1\gamma_2{x_0}&\cdots&\gamma_{k-1}\gamma_k{x_{k-2}}&\cdots&\gamma_{r-1}\gamma_r{x_{r-2}}\\\vdots&\vdots&\vdots&\vdots&\vdots&\vdots&\vdots\\0 &0&0&0&\displaystyle{\prod_{i=1}^k\gamma_i}{x_0}&\cdots&\displaystyle{\prod_{i=r-k+1}^r\gamma_i}{x_0}\end{bmatrix} \in \C_{\mathcal{T},\Gamma}^{r,n}.$$
	Therefore the proof follows by arguing as in the previous case.
\end{proof}

\begin{remark}
	The parameters $\gamma_i$ can all be taken to be $1$ in the optimization that computes $\delta_s(P,\lambda_0,r+1)$ for $s=2$ or $F.$ As shall be seen in~\cref{sec_upper}, this will also decrease the number of variables in the optimization. But these parameters play an important role when computing the upper bound for $\delta_s(P,\lambda_0,r+1)$ from this characterization. However, there is no particular advantage in choosing them to be nonzero real or complex numbers when deriving the upper bound.
\end{remark}

\section{Lower bounds on the distance}



The first lower bound on the distance $\delta_F(P,\lambda_0,r+1)$ is derived from \cref{sigma_result}.

\begin{theorem}\label{lb1} Let $P(\lambda) = \sum_{i=0}^k \lambda^iA_i$ be an $n \times n$ matrix polynomial of degree $k$ and $r < kn$ be a positive integer. For $\Gamma$ as given in \eqref{gamma}, let $\gamma := \left[\begin{array}{ccc} \gamma_1, \dots, \gamma_r \end{array}\right] \in \Gamma,$ and $T_\gamma(P,\lambda_0)$ be defined by \eqref{tgamma}. For $s=2$ or $F$, the distance $\delta_s(P,\lambda_0,r+1)$ to a nearest matrix polynomial having $(\lambda - \lambda_0)^j$ as an elementary divisor with $j \geqslant r+1$ satisfies
	$$\delta_s(P,\lambda_0,r+1) \geqslant \left\{ \begin{array}{ll} \sup_{\gamma \in \Gamma}\frac{f(\gamma)}{\sqrt{F_1(\gamma)}} & \text{ if } r \leqslant k,\\
	\sup_{\gamma \in \Gamma}\frac{f(\gamma)}{\sqrt {F_2(\gamma)}}  & \text{ otherwise,}
	\end{array}\right.$$
	\begin{align*}
	&\text{ where, }  f(\gamma) := \sigma_{(r+1)n-r}(T_{\gamma}(P,\lambda_0)),\\
	& F_1(\gamma) :=  \|M(\lambda_0;r)\|_2^2\max_{1\leqslant p \leqslant r}\left(1 + \sum_{t = 1}^p\prod_{i = t}^p \gamma_{(r+1-i)}^2\right) \text{ and }\\
	& \scalemath{0.85}{F_2(\gamma)  :=  \|M(\lambda_0;r)\|_2^2\max\left\{\max_{1 \leqslant p \leqslant k} \left(1 + \sum_{t=1}^p\prod_{i=t}^p\gamma_{(r+1-i)}^2\right), \max_{k+1 \leqslant p \leqslant r}\left(1 + \sum_{t = p-k+1}^p\prod_{i=t}^p\gamma_{(r+1-i)}^2\right)\right\}},
	\end{align*}
	$M(\lambda_0;r)$ being as defined in \cref{z_to_nz}.
\end{theorem}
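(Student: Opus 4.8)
The plan is to fix an arbitrary $\gamma = [\gamma_1\ \cdots\ \gamma_r] \in \Gamma$ and establish $\delta_s(P,\lambda_0,r+1) \geq f(\gamma)/\sqrt{F_1(\gamma)}$ when $r \leq k$ (and $\geq f(\gamma)/\sqrt{F_2(\gamma)}$ otherwise), simultaneously for $s = 2$ and $s = F$; the supremum over $\gamma \in \Gamma$ then gives the assertion. So let $\Delta P(\lambda) = \sum_{i=0}^k \lambda^i \Delta A_i$ be \emph{any} perturbation for which $P + \Delta P$ has an elementary divisor $(\lambda - \lambda_0)^j$ with $j \geq r+1$. Then $\lambda_0$ is an eigenvalue of $P + \Delta P$ of algebraic multiplicity at least $r+1$, so \cref{sigma_result} forces $\mathrm{rank}(T_\gamma(P+\Delta P,\lambda_0)) \leq (r+1)(n-1)$, i.e.\ $\sigma_{(r+1)n-r}(T_\gamma(P+\Delta P,\lambda_0)) = 0$. (The rank of $T_\gamma$ is unchanged under scaling rows by the positive weights $\gamma_i$, so \cref{sigma_result} applies for every $\gamma \in \Gamma$, not only for $\gamma = [1\ \cdots\ 1]$ as used in \cref{cor_sigma_result}.)

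Since $Q \mapsto T_\gamma(Q,\lambda_0)$ is linear, write $T_\gamma(P,\lambda_0) = T_\gamma(P+\Delta P,\lambda_0) - T_\gamma(\Delta P,\lambda_0)$; using that each singular value is $1$-Lipschitz with respect to the spectral norm,
\[
f(\gamma) = \sigma_{(r+1)n-r}(T_\gamma(P,\lambda_0)) \leq \sigma_{(r+1)n-r}(T_\gamma(P+\Delta P,\lambda_0)) + \|T_\gamma(\Delta P,\lambda_0)\|_2 = \|T_\gamma(\Delta P,\lambda_0)\|_2 .
\]
It then remains to bound $\|T_\gamma(\Delta P,\lambda_0)\|_2$ by a multiple of $\normp{\Delta P}_2$. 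Generalizing the factorization of \cref{fact_T_1} from $\gamma = [1\ \cdots\ 1]$ to arbitrary $\gamma \in \Gamma$ (again a direct multiplication), with $p = \min\{r,k\}$ one has $T_\gamma(\Delta P,\lambda_0) = (I_{r+1} \otimes [\,\Delta P(\lambda_0)\ \cdots\ \frac{1}{p!}\Delta P^{p}(\lambda_0)\,])(\widehat E_\gamma \otimes I_n)$, where $\widehat E_\gamma$ is the scalar $(r+1)(p+1) \times (r+1)$ matrix whose $(i,j)$ block (of height $p+1$) equals $(\prod_{l=j+1}^{i}\gamma_l)\,e_{i-j+1}$ for $0 \leq i-j \leq p$ and is zero otherwise, $e_m$ being the $m$-th standard basis vector of $\mathbb{C}^{p+1}$. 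Submultiplicativity of the spectral norm gives $\|T_\gamma(\Delta P,\lambda_0)\|_2 \leq \|[\,\Delta P(\lambda_0)\ \cdots\ \frac{1}{p!}\Delta P^{p}(\lambda_0)\,]\|_2\,\|\widehat E_\gamma\|_2$, and by \cref{z_to_nz} the first factor equals $\|[\Delta A_0\ \cdots\ \Delta A_k]\,M(\lambda_0;r)\|_2 \leq \normp{\Delta P}_2\,\|M(\lambda_0;r)\|_2$.

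The remaining point is to compute $\|\widehat E_\gamma\|_2$. Since $e_{i-j+1}^{T}e_{i-j'+1} = \delta_{jj'}$, the Gram matrix $\widehat E_\gamma^{*}\widehat E_\gamma$ is diagonal, with $(j,j)$ entry $\sum_{i=j}^{\min\{j+p,\,r\}}(\prod_{l=j+1}^{i}\gamma_l)^2$, whence $\|\widehat E_\gamma\|_2^2 = \max_{0\le j\le r}\sum_{i=j}^{\min\{j+p,\,r\}}(\prod_{l=j+1}^{i}\gamma_l)^2$. The index reversal $t = r+1-i$ together with $q = r-j$ rewrites the $j$-th sum as $1 + \sum_{t}\prod_{i=t}^{q}\gamma_{r+1-i}^2$, the range of $t$ being $1 \leq t \leq q$ when $q \leq p$ and $q-p+1 \leq t \leq q$ when $q > p$; this is exactly $F_1(\gamma)/\|M(\lambda_0;r)\|_2^2$ when $r \leq k$ (so $p = r$ and only the first regime occurs) and $F_2(\gamma)/\|M(\lambda_0;r)\|_2^2$ when $r > k$ (so $p = k$, the two inner maxima in $F_2$ corresponding to $j \geq r-k$ and $j < r-k$). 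Combining the displays, $f(\gamma) \leq \|T_\gamma(\Delta P,\lambda_0)\|_2 \leq \normp{\Delta P}_2 \sqrt{F_1(\gamma)}$ (resp.\ $\sqrt{F_2(\gamma)}$), so $\normp{\Delta P}_2 \geq f(\gamma)/\sqrt{F_1(\gamma)}$, and since $\normp{\Delta P}_F \geq \normp{\Delta P}_2$ the same lower bound holds for $\normp{\Delta P}_F$. Taking the infimum over all admissible $\Delta P$ and then the supremum over $\gamma \in \Gamma$ finishes both cases.

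The routine ingredients are the $1$-Lipschitz property of singular values, submultiplicativity, and \cref{z_to_nz}. The delicate part, which I expect to be the main obstacle, is the explicit factorization of $T_\gamma(\Delta P,\lambda_0)$ for general weights together with the bookkeeping that identifies $\max_{0\le j\le r}\sum_{i=j}^{\min\{j+p,r\}}(\prod_{l=j+1}^{i}\gamma_l)^2$ with $F_1(\gamma)/\|M(\lambda_0;r)\|_2^2$ (case $r\le k$) or $F_2(\gamma)/\|M(\lambda_0;r)\|_2^2$ (case $r>k$) through the reversal $i \mapsto r+1-i$, while tracking how the vanishing of $\Delta P^{m}(\lambda_0)$ for $m > k$ truncates the sums when $r > k$.
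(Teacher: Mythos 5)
Your proof is correct and follows essentially the same route as the paper: apply \cref{sigma_result} to conclude $\sigma_{(r+1)n-r}(T_\gamma(P+\Delta P,\lambda_0))=0$, use the Lipschitz property of singular values, factor $T_\gamma(\Delta P,\lambda_0)$ as a block-diagonal piece times a $\gamma$-weighted structured matrix, and then estimate the two factors with \cref{z_to_nz}. You go further than the paper in explicitly computing the Gram matrix of the structured factor (which the paper leaves implicit) to recover $F_1$ and $F_2$, and you note $\normp{\cdot}_2 \leqslant \normp{\cdot}_F$ to cover both norms; the only slip is that \cref{sigma_result} is already stated for arbitrary $\gamma\in\Gamma$, so your row-scaling remark, while harmless, is unnecessary.
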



\begin{proof}
	Consider the case that $r\leqslant k$. Let $\Delta P(\lambda) = \sum_{i=0}^k\lambda^i\Delta A_i$ be an $n \times n$ matrix polynomial such that $(P + \Delta P)(\lambda)$ has an elementary
	divisor $(\lambda-\lambda_0)^j$ with $j \geqslant r+1.$ By \cref{sigma_result},  $\sigma_{(r+1)n-r}(T_{\gamma}(P + \Delta P,\lambda_0))=0$ where $T_{\gamma}(P + \Delta P,\lambda_0)$
	is as defined in \eqref{tgamma}.
	By the perturbation theory for singular values,
	\begin{eqnarray*}
		f(\gamma) & = & \left|\sigma_{(r+1)n-r}(T_\gamma(P+\Delta P,\lambda_0)) - \sigma_{(r+1)n-r}(T_\gamma(P,\lambda_0)) \right| \\
		& \leqslant & \norm{T_\gamma(\Delta P,\lambda_0)}_2.
	\end{eqnarray*}
	Observe that
	$$T_\gamma(\Delta P,\lambda_0) = \left(I_{r+1}\otimes \begin{bmatrix}\Delta P(\lambda_0)&\cdots&\frac{1}{r!}\Delta P^r(\lambda_0)\end{bmatrix}\right)\begin{bmatrix}\hat E_{\gamma,1}^T&\hat E_{\gamma,2}^T\cdots&\hat E_{\gamma,r+1}^T\end{bmatrix}^T$$
	where $\hat E{\gamma,_j} = E_{\gamma,j} \otimes I_n,$  $E_{\gamma,j}, j = 1, \ldots, r+1,$ being the $(r+1) \times (r+1)$ matrices \\
	$E_{\gamma,1} =\begin{bmatrix}1&&&&\\&&&&\\&&&&\\&&&&\\&&&&\end{bmatrix}, E_{\gamma,2} =\begin{bmatrix}&1&&&\\\gamma_1&&&&\\&&&&\\ &&&&\\&&&&\end{bmatrix}, \dots,
	E_{\gamma,r}  =  \scalemath{0.85}{\begin{bmatrix}&&&1&\\&&\gamma_{r-1}&&\\&\iddots&&&\\ \displaystyle{\prod_{i=1}^{r-1}\gamma_i}&&&&\\&&& &\end{bmatrix}},\\
	E_{\gamma,r+1} =  \scalemath{0.85}{\begin{bmatrix}&&&&1\\&&&\gamma_r&\\&&\gamma_r \gamma_{r-1}&&\\&\iddots&&&\\\displaystyle{\prod_{i=1}^r\gamma_i}&&&&\end{bmatrix}}.$ \\
	$$\text{Therefore, } f(\gamma) \leqslant \norm{\begin{bmatrix}\Delta P(\lambda_0)&\cdots&\frac{1}{r!}\Delta P^r(\lambda_0)\end{bmatrix}}_2\norm{\begin{bmatrix}\hat E_{\gamma,1}^T&\hat E_{\gamma,2}^T&\cdots&\hat E_{\gamma,r+1}^T\end{bmatrix}^T}_2.$$
	Now, $$\scalemath{1}{ \norm{\begin{bmatrix}\Delta P(\lambda_0) & \cdots & \frac{1}{r!}\Delta P^r(\lambda_0)\end{bmatrix}}_2 \leqslant \normp{\Delta P}_2\|M(\lambda_0;r)\|_2,}$$ where the last inequality holds due to \cref{z_to_nz}.
	This shows for $s=2$ or $F$, $$\frac{f(\gamma)}{\sqrt{F_1(\gamma)}}\leqslant \delta_s(P,\lambda_0,r+1),$$
	and the proof follows by taking the supremum of the left hand side as $\gamma$ varies over $\Gamma$. The lower bound for the case $r>k$ can be proved by similar arguments.
\end{proof}

A second lower bound is obtained from the formulation of $\delta_F(P,\lambda_0,r+1)$ as the reciprocal of a generalized $\mu$-value as derived in \cref{dist_as_mv}. The following lemma, the proof of which is evident from elementary properties of singular values, will be used to derive this bound.

\begin{lemma}\label{lem_g_mu_value}
	Let $M\in \mathbb{C}^{p\times q}$ and $N\in \mathbb{C}^{q\times p}$. Then for $i=1,\dots,\min\{p,q\}$
	$$\mathrm{inf}\{\|M\|_2: \mathrm{rank}(I_p-MN)\leqslant p-i\}=(\sigma_i(N))^{-1}.$$
\end{lemma}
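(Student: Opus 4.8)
For $M \in \mathbb{C}^{p \times q}$ and $N \in \mathbb{C}^{q \times p}$, for $i = 1, \dots, \min\{p,q\}$,
$$\inf\{\|M\|_2 : \operatorname{rank}(I_p - MN) \leqslant p - i\} = (\sigma_i(N))^{-1}.$$

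The plan is to prove the two inequalities separately, working with the singular value decomposition of $N$ throughout. Write $N = U\Sigma V^*$ where $U \in \mathbb{C}^{q\times q}$ and $V \in \mathbb{C}^{p\times p}$ are unitary and $\Sigma \in \mathbb{C}^{q\times p}$ carries the singular values $\sigma_1(N) \geqslant \sigma_2(N) \geqslant \cdots$ on its main diagonal. The condition $\operatorname{rank}(I_p - MN) \leqslant p - i$ is equivalent to $\operatorname{nullity}(I_p - MN) \geqslant i$, and the $2$-norm of $M$ is unitarily invariant, so after the substitution $\tilde M := V^* M U$ the problem reduces to: find the smallest $\|\tilde M\|_2$ such that $I_p - \tilde M \Sigma$ has nullity at least $i$. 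This normalization is the first step and it lets us treat $\Sigma$ as essentially diagonal.

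For the upper bound $\inf \leqslant (\sigma_i(N))^{-1}$, I would exhibit an explicit minimizer. If $\sigma_i(N) = 0$ the right-hand side is $+\infty$ and there is nothing to prove, so assume $\sigma_i(N) > 0$. Then $\sigma_1(N), \dots, \sigma_i(N)$ are all positive; take $\tilde M$ to be the $p \times q$ matrix whose $(j,j)$ entry is $1/\sigma_j(N)$ for $j = 1, \dots, i$ and whose other entries are zero. A direct computation shows $I_p - \tilde M\Sigma$ has its first $i$ diagonal entries equal to zero, hence rank at most $p - i$, while $\|\tilde M\|_2 = \max_{1 \leqslant j \leqslant i} 1/\sigma_j(N) = 1/\sigma_i(N)$ since the $\sigma_j(N)$ are decreasing. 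Transporting back through the unitaries gives the desired $M$ of the same norm.

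For the lower bound $\inf \geqslant (\sigma_i(N))^{-1}$, suppose $\tilde M$ achieves $\operatorname{nullity}(I_p - \tilde M\Sigma) \geqslant i$; I must show $\|\tilde M\|_2 \geqslant 1/\sigma_i(N)$. Pick an $i$-dimensional subspace $W \subseteq \ker(I_p - \tilde M\Sigma)$, so $x = \tilde M\Sigma x$ for every $x \in W$; this forces $\|x\|_2 \leqslant \|\tilde M\|_2 \|\Sigma x\|_2$ for all $x \in W$. If some $x \in W$ had $\Sigma x = 0$ then $x = 0$, so $\Sigma$ restricted to $W$ is injective and $\Sigma W$ is $i$-dimensional; moreover $\|\Sigma x\|_2 \leqslant \|\tilde M\|_2^{-1}$ is impossible to combine naively, so instead I use the Courant–Fischer minimax characterization: on the $i$-dimensional subspace $W$ we have $\|\Sigma x\|_2 \geqslant \|x\|_2 / \|\tilde M\|_2$, hence $\min_{0 \neq x \in W} \|\Sigma x\|_2/\|x\|_2 \geqslant 1/\|\tilde M\|_2$, and since $\Sigma$ has at most $\min\{p,q\}$ nonzero singular values, the existence of an $i$-dimensional subspace on which $\|\Sigma x\|_2/\|x\|_2$ is bounded below by $c$ forces $\sigma_i(\Sigma) = \sigma_i(N) \geqslant c$ (this is exactly the minimax/maximin formula $\sigma_i(N) = \max_{\dim W = i}\min_{0\neq x\in W}\|Nx\|/\|x\|$, valid since $\sigma_i$ of $\Sigma$ acting on $\mathbb{C}^p$ coincides with $\sigma_i(N)$ for $i \leqslant \min\{p,q\}$). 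Therefore $\sigma_i(N) \geqslant 1/\|\tilde M\|_2$, i.e. $\|\tilde M\|_2 \geqslant 1/\sigma_i(N)$. Combining the two inequalities yields equality, and since the value $1/\sigma_i(N)$ is attained by the explicit minimizer, the infimum is in fact a minimum.

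The only mild subtlety — which I would call out rather than treat as a real obstacle — is the interface between $\sigma_i(N)$ as a singular value of the $q \times p$ matrix $N$ and the minimax formula applied to $\Sigma$ viewed as a map $\mathbb{C}^p \to \mathbb{C}^q$: one needs $i \leqslant \min\{p,q\}$ for $\sigma_i(N)$ to be a genuine (possibly zero) singular value and for the maximin formula $\sigma_i(N) = \max_{\dim W = i}\min_{0 \neq x \in W}\|Nx\|_2/\|x\|_2$ to hold, which is precisely the hypothesis in the lemma. Everything else is a routine SVD computation, which is why the paper says the proof is evident.
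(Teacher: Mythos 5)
Your proof is correct and is in the spirit of what the paper intends: the paper does not actually write out a proof of this lemma, saying only that it is ``evident from elementary properties of singular values,'' and your SVD normalization of $N$ together with the max--min (Courant--Fischer) characterization $\sigma_i(N)=\max_{\dim W=i}\min_{0\neq x\in W}\|Nx\|_2/\|x\|_2$ is exactly the standard elementary route, including the correct handling of the degenerate case $\sigma_i(N)=0$ where the feasible set is empty.
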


\begin{theorem}\label{lb2}
	Let $P(\lambda)=\sum_{i=0}^{k}\lambda^i A_i$ be an $n \times n$ matrix polynomial of degree $k,$ and $1 \leqslant r < kn.$
	For any $\lambda_0 \in \C$ which is not an eigenvalue of $P(\lambda),$ let $T(P,\lambda_0)$ be as defined by~\eqref{blocktoep} and $E$ and $M(\lambda_0;r)$ be as defined in
	\cref{fact_T_1} and \cref{z_to_nz} respectively. For a given positive integer $t,$ let $\mathbb{S}^t$ be the collection of column vectors of length $t$ with positive entries and $$W_{t,n}(a):=\begin{bmatrix}a_1&&\\&\ddots&\\&&a_t\end{bmatrix}\otimes I_n,$$
	where $a=\begin{bmatrix}a_1,\cdots,a_t\end{bmatrix}^T \in \mathbb{S}^t.$
	Then for $s = 2$ or $F,$ $$\delta_s(P,\lambda_0,r+1)\geqslant\sup_{a\in\mathbb{S}^{r+1}} (\sigma_{r+1}(B(\lambda_0,a,P)))^{-1},$$
	where $B(\lambda_0,a,P) :=  W_{r+1,(k+1)n}(a)(I_{r+1}\otimes M(\lambda_0;r))E~(T(P,\lambda_0))^{-1}W_{r+1,n}^{-1}(a).$
	
	If $\lambda_0=0$ then for $s = 2,$ or $F,$ $$\delta_s(P,0,r+1)\geqslant\sup_{a\in\mathbb{S}^{r+1}} \sigma_{r+1}((\widehat{B}(0,a,P)))^{-1}.$$
	for $\widehat{B}(0,a,P) := W_{r+1,({\min\{r,k\}}+1)n}(a)E~(T(P,0))^{-1}W_{r+1,n}^{-1}(a).$
	
\end{theorem}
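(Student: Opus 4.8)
The plan is to leverage \cref{dist_as_mv}, which already identifies $\delta_s(P,\lambda_0,r+1)$ (up to the harmless factor $\sqrt{r+1}$ in the Frobenius case) as the reciprocal of a generalized $\mu$-value of the matrix $N := (I_{r+1}\otimes M(\lambda_0;r))E~(T(P,\lambda_0))^{-1}$, and then to bound that generalized $\mu$-value from above by introducing a positive diagonal scaling and passing from the structured $\mu$-value to an unstructured one via \cref{lem_g_mu_value}. First I would recall from the proof of \cref{dist_as_mv} that $-\Delta P(\lambda)\in S(P,\lambda_0)$ if and only if $\mathrm{rank}\bigl(I-\bigl(I_{r+1}\otimes[\Delta A_0\ \cdots\ \Delta A_k]\bigr)N\bigr)\leqslant (r+1)n-(r+1)$, i.e.\ the nullity is at least $r+1$.

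The key trick is a similarity (block-diagonal scaling) that does not change the rank condition. For $a=[a_1,\dots,a_{r+1}]^T\in\mathbb{S}^{r+1}$, writing $D_1 := W_{r+1,n}(a)$ and $D_2 := W_{r+1,(k+1)n}(a)$, one has the identity $I_{r+1}\otimes[\Delta A_0\ \cdots\ \Delta A_k] = D_1^{-1}\bigl(I_{r+1}\otimes[\Delta A_0\ \cdots\ \Delta A_k]\bigr)D_2$, because the scalings $a_i$ that multiply the $i$-th block row on the left are exactly cancelled by the inverse scalings on the block columns of $D_2$ acting on each $[\Delta A_0\ \cdots\ \Delta A_k]$ copy. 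Hence
$$
I-\bigl(I_{r+1}\otimes[\Delta A_0\ \cdots\ \Delta A_k]\bigr)N
= D_1^{-1}\Bigl(I-\bigl(I_{r+1}\otimes[\Delta A_0\ \cdots\ \Delta A_k]\bigr)\,D_2 N D_1^{-1}\Bigr)D_1,
$$
so the nullity of the left side equals the nullity of $I - \bigl(I_{r+1}\otimes[\Delta A_0\ \cdots\ \Delta A_k]\bigr)B(\lambda_0,a,P)$, where $B(\lambda_0,a,P) = D_2 N D_1^{-1}$ is exactly as defined in the statement. Next I would drop the block structure on the perturbation: since $\bigl(I_{r+1}\otimes[\Delta A_0\ \cdots\ \Delta A_k]\bigr)$ ranges over a proper subset of all matrices of the correct size, replacing it by an arbitrary matrix $M$ of that size can only enlarge the feasible set, so the infimum of the (appropriate) norm of $[\Delta A_0\ \cdots\ \Delta A_k]$ subject to the rank condition is at least the infimum of $\|M\|_2$ over all $M$ with $\mathrm{rank}(I-MB(\lambda_0,a,P))\leqslant (r+1)n-(r+1)$; by \cref{lem_g_mu_value} the latter equals $(\sigma_{r+1}(B(\lambda_0,a,P)))^{-1}$. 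Taking the supremum over $a\in\mathbb{S}^{r+1}$ and using that $\normp{\Delta P}_s$ dominates (or equals, in the $2$-norm case, after accounting for the $\sqrt{r+1}$ normalization in the $\mu$-value identity of \cref{dist_as_mv}) the relevant norm of $[\Delta A_0\ \cdots\ \Delta A_k]$ gives the claimed bound; the case $\lambda_0=0$ is identical with $M(\lambda_0;r)$ absent and $D_2$ replaced by $W_{r+1,(\min\{r,k\}+1)n}(a)$.

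The main obstacle I anticipate is bookkeeping the norms correctly: one must check that relaxing from the structured Frobenius norm $\normp{\Delta P}_F$ to $\|[\Delta A_0\ \cdots\ \Delta A_k]\|_2$ (which is what \cref{lem_g_mu_value} controls) goes in the right direction, and that the $\sqrt{r+1}$ factor appearing in \cref{dist_as_mv} for the Frobenius case is correctly absorbed so that the final inequality is stated uniformly for $s=2$ or $F$ without a stray constant. A secondary subtlety is verifying the cancellation identity $I_{r+1}\otimes[\Delta A_0\ \cdots\ \Delta A_k] = D_1^{-1}(I_{r+1}\otimes[\Delta A_0\ \cdots\ \Delta A_k])D_2$ at the level of block indices — it holds precisely because $D_2$ scales the $i$-th group of $(k+1)n$ columns by $a_i$ and $D_1^{-1}$ scales the $i$-th group of $n$ rows by $a_i^{-1}$, and within block $(i,i)$ the factor $[\Delta A_0\ \cdots\ \Delta A_k]$ is reproduced; the off-diagonal blocks remain zero on both sides. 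Once these two points are pinned down, the rest is a direct application of \cref{lem_g_mu_value} followed by a supremum over the scaling parameters.
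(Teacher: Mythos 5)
Your proposal is correct and follows essentially the same route as the paper: rewrite the structured perturbation via the cancellation identity $I_{r+1}\otimes[\Delta A_0\ \cdots\ \Delta A_k]=W_{r+1,n}^{-1}(a)\,(I_{r+1}\otimes[\Delta A_0\ \cdots\ \Delta A_k])\,W_{r+1,(k+1)n}(a)$, transfer the scaling onto $N$ by a similarity that preserves nullity, drop the block-diagonal structure, and apply \cref{lem_g_mu_value} followed by $\normp{\Delta P}_s\geqslant\lVert[\Delta A_0\ \cdots\ \Delta A_k]\rVert_2$. The only caveat is that your worry about the $\sqrt{r+1}$ factor from \cref{dist_as_mv} is a red herring here --- the lower-bound chain goes directly through $\normp{\Delta P}_F\geqslant\normp{\Delta P}_2=\lVert[\Delta A_0\ \cdots\ \Delta A_k]\rVert_2$ without ever invoking the exact $\mu$-value identity, so no normalization constant needs to be tracked.
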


\begin{proof}
	If $-\Delta P(\lambda) \in S(P, \lambda_0),$ from inequality~\cref{dist_jchain_mu_1} the nullity of
	$$I-W_{r+1,n}^{-1}(a)\left(I_{r+1}\otimes [\Delta A_0 \cdots \Delta A_k]
	\right)
	W_{r+1,(k+1)n}(a)(I_{r+1}\otimes M(\lambda_0;r)) E~(T(P,\lambda_0))^{-1}$$
	is at least $r+1.$
	\noindent Therefore
	$\mathrm{nullity} \left(I-\left(I_{r+1}\otimes [\Delta A_0 \cdots \Delta A_k]\right)B(\lambda_0,a,P)\right)\geqslant r+1.$
	By ~\cref{lem_g_mu_value},
	$\norm{\begin{bmatrix}\Delta A_0&\cdots&\Delta A_k\end{bmatrix}}_2\geqslant (\sigma_{r+1}(B(\lambda_0,a,P)))^{-1},$
	so that for $s = 2$ or $F,$
	$$\delta_s(P,\lambda_0,r+1)\geqslant\sup_{a\in\mathbb{S}^{r+1}} (\sigma_{r+1}(B(\lambda_0,a,P)))^{-1}.$$
	
	Similarly if $-\Delta P(\lambda_0) \in S(P,0),$ from inequality~\cref{dist_jchain_mu_2} the nullity of
	$$I-W_{r+1,n}^{-1}(a)\left(I_{r+1}\otimes \begin{bmatrix}\Delta A_0&\cdots&\Delta A_p\end{bmatrix}
	\right)W_{r+1,(p+1)n}(a) E~(T(P,0))^{-1}$$
	is at least $r+1,$ where $p = \min\{r,k\}.$ This is equivalent to
	$$\mathrm{nullity}\left(I-\left(I_{r+1}\otimes \begin{bmatrix}\Delta A_0&\cdots&\Delta A_p\end{bmatrix}\right) \widehat{B}(0,a,P)\right) \geqslant r+1.$$
	\noindent By ~\cref{lem_g_mu_value},
	$\norm{\begin{bmatrix}\Delta A_0&\cdots&\Delta A_p\end{bmatrix}}_2\geqslant (\sigma_{r+1} (\widehat{B}(0,a,P)))^{-1}.$
	\noindent Therefore
	$$\delta_s(P,0,r+1)\geqslant\sup_{a\in\mathbb{S}^{r+1}} (\sigma_{r+1}(\widehat{B}(0,a,P)))^{-1} \text{ for } s = 2 \text{ or } F.$$
\end{proof}

\section{Upper bound on the distance}\label{sec_upper}

In this section an upper bound on the distance $\delta_F(P,\lambda_0,r+1)$ that can be used in conjunction with the lower bound obtained in \cref{lb1} is derived.

\begin{theorem}\label{ubound} Let $P(\lambda) = \sum_{i=0}^k \lambda^iA_i$ be an $n \times n$ matrix polynomial of degree $k$ and $r < kn$ be a positive integer. For $\Gamma$ as given in \eqref{gamma}, let $\gamma := \left[\begin{array}{ccc} \gamma_1, \dots, \gamma_r \end{array}\right] \in \Gamma,$ and let $f(\gamma) := \sigma_{(r+1)n-r}(T_\gamma(P,\lambda_0))$ where $T_\gamma(P,\lambda_0)$ is as defined in \eqref{tgamma}. Suppose that $v(\gamma)=\begin{bmatrix}v_0^T&v_1^T&\cdots&v_r^T\end{bmatrix}^T$ and $u(\gamma)=\begin{bmatrix}u_0^T&u_{1}^T&\cdots&u_r^T\end{bmatrix}^T$ are the corresponding right and left singular vectors with $v_i, u_i \in \C^n$ for $i = 0,1, \ldots, r$ dependent on $\gamma.$ Also let $\Gamma_0 \subset \Gamma$ be the collection of all $\gamma \in \Gamma$ with the property that the vector $v_0$ formed by the first $n$ entries of a right singular vector $v(\gamma)$ associated with the singular value $f(\gamma)$  of $T_\gamma(P,\lambda_0)$ is nonzero. Then for $s=2$ or $F$
	\begin{equation} \delta_s(P,\lambda_0,r+1) \leqslant \inf_{\gamma \in \Gamma_0} f(\gamma)\norm{ U(\gamma)(M(\lambda_0;r)V(\gamma))^\dagger}_s \end{equation}
	where $M(\lambda_0;r)$ is as defined in \cref{z_to_nz},  $U(\gamma) = \begin{bmatrix}u_0&\cdots &u_r\end{bmatrix},$
	$$V(\gamma) :=  \scalemath{0.9}{\left\{\begin{array}{ll} \begin{bmatrix}
		v_0&v_1&\cdots&v_r\\
		0&\gamma_1v_0&\cdots&\gamma_rv_{r-1}\\
		0&0&\cdots&\gamma_{r-1}\gamma_rv_{r-2}\\
		\vdots&\vdots&\vdots&\vdots\\
		0&0&\cdots&\displaystyle{\prod_{i=1}^r\gamma_i} v_0
		\end{bmatrix}^\dagger &  \text{ if } r \leqslant k, \\
		\begin{bmatrix}
		v_0&v_1&v_2&\cdots&v_k&\cdots&v_r\\
		0&\gamma_1v_0&\gamma_2v_1&\cdots&\gamma_kv_{k-1}&\cdots&\gamma_rv_{r-1}\\
		0&0&\gamma_1\gamma_2{v_0}&\cdots&\gamma_{k-1}\gamma_kv_{k-2}&\cdots&\gamma_{r-1}\gamma_rv_{r-2}\\
		\vdots&\vdots&\vdots&\vdots&\vdots&\vdots&\vdots\\
		0 &0&0&0&\displaystyle{\prod_{i=1}^k\gamma_i}v_0&\cdots & \displaystyle{\prod_{i=r-k+1}^r\gamma_i}v_{r-k}\end{bmatrix}^\dagger &  \text{ otherwise, }
		\end{array}\right.}$$
	and the infimum is taken to be $\infty$ if $\Gamma_0 = \emptyset.$
\end{theorem}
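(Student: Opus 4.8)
The plan is to read off the bound directly from the characterization of $\delta_s(P,\lambda_0,r+1)$ as an infimum over block Toeplitz-like matrices proved in \cref{form_delta_F}: for each $\gamma\in\Gamma_0$ I would substitute into that infimum the block Toeplitz-like matrix assembled from the right singular vector $v(\gamma)$ and the scalars $\gamma_1,\dots,\gamma_r$, and then verify that the objective evaluated there equals $f(\gamma)\norm{U(\gamma)\bigl(M(\lambda_0;r)V(\gamma)\bigr)^{\dagger}}_s$. Taking the infimum over $\gamma\in\Gamma_0$ then gives the claim, the case $\Gamma_0=\emptyset$ being handled by the stated convention. So fix $\gamma\in\Gamma_0$ and let $f(\gamma)$, $v(\gamma)=[v_0^T\ \cdots\ v_r^T]^T$ and $u(\gamma)=[u_0^T\ \cdots\ u_r^T]^T$ be as in the hypothesis, so that in particular $T_\gamma(P,\lambda_0)\,v(\gamma)=f(\gamma)\,u(\gamma)$.

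First I would let $V(\gamma)$ be the block Toeplitz-like matrix displayed in the statement, namely the member of $\C_{\mathcal{T},\Gamma}^{r,n}$ obtained from the defining pattern in \cref{form_delta_F} by taking $x_i=v_i$. Since $\gamma\in\Gamma_0$ forces $v_0\neq 0$ and $\gamma\in\Gamma$ forces each $\gamma_i>0$, this $V(\gamma)$ does belong to $\C_{\mathcal{T},\Gamma}^{r,n}$ and is therefore an admissible argument of the infimum in \eqref{label1}--\eqref{label2}.

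The heart of the proof is the identity
\[
\begin{bmatrix}P(\lambda_0)&P^\prime(\lambda_0)&\cdots&\frac{1}{p!}P^{p}(\lambda_0)\end{bmatrix}V(\gamma)\;=\;f(\gamma)\,U(\gamma),\qquad p:=\min\{r,k\},
\]
which I would verify column by column. Pairing the $i$-th block row of $V(\gamma)$ with the coefficient block $\frac{1}{i!}P^{i}(\lambda_0)$ of the leading row and reindexing $b=j-i$, the $(j+1)$-th column of the left-hand side equals $\sum_{b}\bigl(\prod_{l=b+1}^{j}\gamma_l\bigr)\frac{1}{(j-b)!}P^{j-b}(\lambda_0)\,v_b$, the sum running over $\max\{0,j-k\}\le b\le j$ because $P^{m}\equiv 0$ for $m>k$. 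By \eqref{tgamma} this is exactly the $(j+1)$-th $n$-block of $T_\gamma(P,\lambda_0)\,v(\gamma)$, which by the singular-vector relation is $f(\gamma)\,u_j$; hence the columns of the left-hand side are $f(\gamma)u_0,\dots,f(\gamma)u_r$, i.e.\ $f(\gamma)\,U(\gamma)$. The case $r>k$ proceeds the same way with $p=k$, the banded form of $T_\gamma(P,\lambda_0)$, and the wider Toeplitz pattern from the ``otherwise'' branch of \cref{form_delta_F}; and when $\lambda_0=0$ one has $\bigl[P(0)\ \cdots\ \frac{1}{p!}P^{p}(0)\bigr]=[A_0\ \cdots\ A_p]$ while $M(0;r)$ is, up to zero-padding, the identity, so the factor $M(\lambda_0;r)$ may simply be dropped, in accordance with \eqref{label1}.

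To finish, since $V(\gamma)\in\C_{\mathcal{T},\Gamma}^{r,n}$, \cref{form_delta_F} --- via \eqref{label2}, or \eqref{label1} when $\lambda_0=0$ --- together with the identity above gives, for $s=2$ or $F$,
\[
\delta_s(P,\lambda_0,r+1)\ \le\ \norm{\begin{bmatrix}P(\lambda_0)&\cdots&\frac{1}{p!}P^{p}(\lambda_0)\end{bmatrix}V(\gamma)\bigl(M(\lambda_0;r)V(\gamma)\bigr)^{\dagger}}_s\ =\ f(\gamma)\,\norm{U(\gamma)\bigl(M(\lambda_0;r)V(\gamma)\bigr)^{\dagger}}_s,
\]
and the right-hand side does not depend on the (unit) normalization of the singular vectors. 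Taking the infimum over $\gamma\in\Gamma_0$ yields the asserted inequality. I expect the only genuinely delicate part to be the index bookkeeping in the key identity --- matching up the products $\prod_l\gamma_l$, the factorials $\frac{1}{(j-b)!}$, and the shifted block positions of $V(\gamma)$ with those of $T_\gamma(P,\lambda_0)$, uniformly across the two shapes $r\le k$ and $r>k$ --- while everything else is a direct appeal to \cref{form_delta_F}.
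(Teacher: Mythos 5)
Your proof is correct and follows essentially the same route as the paper: substitute the block Toeplitz-like matrix $V(\gamma)$ built from the right singular vector into the optimization of \cref{form_delta_F}, use the singular-vector relation to simplify the objective, and take the infimum over $\Gamma_0$. Your column-by-column check also correctly yields the identity $\bigl[P(\lambda_0)\ \cdots\ \tfrac{1}{p!}P^p(\lambda_0)\bigr]V(\gamma)=f(\gamma)\,U(\gamma)$ \emph{with} the factor $f(\gamma)$, which the paper's proof states without it---an evident typo, since that factor duly reappears in the final bound.
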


\begin{proof} From equations~\eqref{label1} and \eqref{label2} it is clear that if $\gamma \in \Gamma_0,$ then $V(\gamma)$ satisfies
	
	$$\scalemath{0.9}{\delta_s(P,\lambda_0,r+1)  \leqslant \left\{\begin{array}{ll} \norm{\begin{bmatrix}P(\lambda_0)& \cdots & \frac{1}{r!}P^r(\lambda_0)\end{bmatrix}V(\gamma)(M(\lambda_0;r)V(\gamma))^\dagger}_s & \text{ if } r \leqslant k, \\[0.5ex]
		\norm{\begin{bmatrix}P(\lambda_0)& \cdots & \frac{1}{k!}P^k(\lambda_0)\end{bmatrix}V(\gamma)(M(\lambda_0;r)V(\gamma))^\dagger}_s & \text {otherwise }\end{array}\right.} $$
	for $s=2$ or $F$. As $v(\gamma)$ is a right singular vector of $T_\gamma(P,\lambda_0)$ corresponding to $f(\gamma),$ it is clear that $\begin{bmatrix}P(\lambda_0)& \cdots & \frac{1}{p!}P^p(\lambda_0)\end{bmatrix}V(\gamma) = U(\gamma)$ where $p = \min\{r,k\}$.  In either case,
	$\delta_s(P,\lambda_0,r+1) \leqslant f(\gamma)\norm{ U(\gamma)(M(\lambda_0;r)V(\gamma))^\dagger}_s,$
	and the proof follows by taking the infimum of the right hand side of the above inequality as $\gamma$ varies over $\Gamma_0.$
\end{proof}

\begin{remark} A matrix polynomial for which $\Gamma_0 = \emptyset$ has never been encountered in practice. Therefore it is conjectured that the upper bound in ~\cref{ubound} is never $\infty.$ In fact numerical experiments show that in many cases this upper bound is very close to the computed value of the distance.
\end{remark}

\section{Some special cases}

The quantities $\delta_s(P,0,2), s = 2, F,$ are a measure of the distance to a matrix polynomial nearest to $P(\lambda) = \sum_{i=0}^k \lambda^i A_i,$  having a defective eigenvalue at $0.$ In this case the problem is equivalent to finding a nearest matrix pencil to $\lambda A_1 + A_0$ in the chosen norm that has $0$ as a defective eigenvalue. Note that this distance is of significant practical interest as when $P(\lambda)$ is replaced by $\mathrm{rev} \, P(\lambda),$ then it is the distance to a nearest matrix polynomial with a defective eigenvalue at $\infty.$ This problem was considered in \cite{KotDBB19} for the matrix pencils where several results that apply only to this special case were obtained.

Firstly, the upper bound for the distances in \cref{ubound} is given by $$\displaystyle\inf_{\gamma \in \Gamma_0}f(\gamma)\norm{\begin{bmatrix}u_0& u_1\end{bmatrix}\begin{bmatrix}v_0 & v_1\\0& \gamma v_0\end{bmatrix}^\dagger}_s$$ for $s=2$ or $F$ where $\begin{bmatrix}v_0\\v_1\end{bmatrix}$ and $\begin{bmatrix}u_0\\u_1\end{bmatrix}$ are the right and left singular vectors of $\begin{bmatrix}P(0) & 0\\\gamma P^\prime(0) & P(0) \end{bmatrix}$ corresponding to its $(2n-1)$th singular value $f(\gamma)$. In this case, $\gamma$ can be allowed to vary over all positive real numbers as the restriction $v_0 \neq 0$ can be removed. To see this, assume that $\gamma > 0$ is such that the corresponding vector $v_0 = 0.$  Then clearly $u_0 = 0$ and $f(\gamma)\norm{\begin{bmatrix}0& u_1\end{bmatrix}\begin{bmatrix}0& v_1\\0& 0\end{bmatrix}^\dagger}_s=f(\gamma).$ Let $\Delta P(\lambda) = \sum_{i=0}^k \lambda^i \Delta A_i$ where $\Delta A_0 = -f(\gamma)u_1v_1^*$ and $\Delta A_i = 0$ for all $i = 2, \ldots, k.$ Then $$\scalemath{0.9}{[\Delta A_0 \quad \Delta A_1] = -f(\gamma)\begin{bmatrix}0& u_1\end{bmatrix}\begin{bmatrix}0& v_1\\0& 0\end{bmatrix}^\dagger \text{ and } \normp{\Delta P}_s = f(\gamma)\left\|\begin{bmatrix}0& u_1\end{bmatrix}\begin{bmatrix}0& v_1\\0& 0\end{bmatrix}^\dagger\right\|_s = f(\gamma)}$$
for $s = 2,$ or $F$ and the relations
$$\begin{bmatrix} P(0) & \\ \gamma P^\prime(0) & P(0) \end{bmatrix} \begin{bmatrix} 0 \\ v_1 \end{bmatrix} = f(\gamma) \begin{bmatrix} 0 \\ u_1 \end{bmatrix} \text{ and } \begin{bmatrix} 0 & u_1^* \end{bmatrix}\begin{bmatrix} P(0) & \\ \gamma P^\prime(0) & P(0) \end{bmatrix} = f(\gamma) \begin{bmatrix} 0 & v_1^* \end{bmatrix},$$
imply that $A_0v_1 = f(\gamma)u_1, u_1^*A_0 = f(\gamma)v_1^*$ and $u_1^*A_1 = 0.$ Therefore,
$$u_1^*(P + \Delta P)(0) = u_1^*A_0 - f(\gamma)v_1^* = 0, \, (P + \Delta P)(0)v_1 = A_0v_1 - f(\gamma)u_1 = 0$$ and $u_1^*(P + \Delta P)'(0)v_1 = u_1^*A_1v_1 = 0.$ So unless $(P + \Delta P)(\lambda)$ is singular, $0$ is a multiple eigenvalue of $(P + \Delta P)(\lambda).$ In either case the objective is achieved as the polynomial $(P + \Delta P)(\lambda)$ is arbitrarily close to having an elementary divisor $\lambda^j, \, j \geq 2.$

Secondly, a formula for the Frobenius norm distance to a nearest matrix polynomial with a defective eigenvalue at $0$ may be found for the special case that $0$ is already and eigenvalue of $P(\lambda)$ (so that $\mathrm{rank} \, A_0 = n-1$) and the allowable perturbations to $P(\lambda)$  have the property that their coefficient matrices have rank atmost $1.$ The formula is given by the following theorem, the proof of which is identical to that of~\cite[Theorem 5.4]{KotDBB19}.

\begin{theorem}
	Let $P(\lambda)=\sum_{i=0}^k \lambda^i A_i$ be an $n \times n$ matrix polynomial of degree $k$ where $\mathrm{rank} A_0 = n-1$. Suppose $A_0 = U \Sigma V^*$ is
	a Singular Value Decomposition (SVD) of $A_0$ and $a_{i,j}$ is the entry of $U^*A_1V$ in the $i^{\rm th}$ row and $j^{\rm th}$ column.
	Define $X$ and $Y$ as:
	\[
	{X\!\!  :=\!\!   \begin{bmatrix}\sigma_{1}   & \ldots & \ldots & \ldots &0\\
		0  &  \sigma_{2} & \ldots & \ldots &0\\
		\vdots & \vdots &  \ddots  & \vdots & \vdots \\
		\vdots   & \ldots & 0 &\sigma_{n-1}  & 0\\
		a_{n,1}  & \ldots &\ldots & a_{n,n-1} & a_{n.n}\\ \end{bmatrix} \!\! ,
		Y \!\! :=\!\!   \begin{bmatrix}\sigma_{1}   & \ldots & \ldots & \ldots & a_{1,n}\\
		0  &  \sigma_{2} & \ldots & \ldots  & a_{2,n}\\
		\vdots & \vdots  & \ddots  & \vdots & \vdots \\
		\vdots   & \ldots & 0 &\sigma_{n-1}  & a_{n-1,n}\\
		0  & \ldots &\ldots & 0 & a_{n.n}\\ \end{bmatrix} \!\! .}
	\]
	where $\sigma_1\geqslant \sigma_2\geqslant \cdots \geqslant \sigma_{n-1} > 0$ are the singular of $A_0.$
	Then, the distance with respect to the norm $\normp{ \cdot }_F,$ to the nearest matrix polynomial with a defective eigenvalue at $0$ under the restriction that the coefficient matrices of the perturbating matrix polynomial have rank at most $1$ is given by $\min\{\sigma_{\min}(X),\sigma_{\min}(Y)\}.$
\end{theorem}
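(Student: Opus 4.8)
My plan is to reduce the statement to the matrix pencil case that is already settled in \cite[Theorem 5.4]{KotDBB19}, and then recall how that argument runs. The crucial observation is that a Jordan chain of length at least $2$ at $\lambda_0=0$ for $P+\Delta P$ involves only $(P+\Delta P)(0)=A_0+\Delta A_0$ and $(P+\Delta P)'(0)=A_1+\Delta A_1$, since the defining equations read $(A_0+\Delta A_0)x_0=0$ and $(A_0+\Delta A_0)x_1+(A_1+\Delta A_1)x_0=0$ with $x_0\neq 0$. Because $\normp{\Delta P}_F^2=\sum_{i=0}^k\|\Delta A_i\|_F^2$, replacing $\Delta A_i$ by $0$ for $i\geq 2$ (which is permitted under the rank-at-most-one restriction) can only decrease $\normp{\Delta P}_F$ without affecting whether $0$ is a defective eigenvalue, and by \cref{dist_is_zero} and \cref{sigma_result} one may pass freely between ``$P+\Delta P$ has an elementary divisor $\lambda^j,\ j\geq 2$'' and ``$0$ is an eigenvalue of $P+\Delta P$ of algebraic multiplicity at least $2$''. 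Hence the quantity to be computed equals the Frobenius-norm distance from the pencil $\lambda A_1+A_0$ to a nearest pencil having a defective eigenvalue at $0$, subject to each perturbation coefficient having rank at most one, which is exactly \cite[Theorem 5.4]{KotDBB19}; moreover unitary equivalence $P(\lambda)\mapsto U^*P(\lambda)V$ preserves the distance, the Frobenius norm, and the rank-at-most-one structure, so one may normalize $A_0=U\Sigma V^*$ to $A_0=\Sigma=\mathrm{diag}(\sigma_1,\dots,\sigma_{n-1},0)$ and write $U^*A_1V=(a_{i,j})$, in which case $e_n$ spans both $\ker A_0$ and $\ker A_0^*$.

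The argument then shows that the infimum is attained over two structured families of admissible perturbations. In the first, the left null vector of $A_0$ is kept as a left null vector of $A_0+\Delta A_0$, which forces the last row of $A_0+\Delta A_0$ to remain zero; writing the two Jordan chain equations as a single linear system for the perturbation coefficients and applying the minimum-Frobenius-norm solution formula \cite[Lemma 1.3]{Sun93} gives least perturbation norm $\sigma_{\min}(X)$, where $X$ is precisely $\Sigma$ with its (zero) last row replaced by the last row $u_n^*A_1V$ of $U^*A_1V$. In the second, the right null vector of $A_0$ is preserved, forcing the last column of $A_0+\Delta A_0$ to remain zero, and the analogous computation yields $\sigma_{\min}(Y)$, with $Y$ equal to $\Sigma$ with its (zero) last column replaced by $U^*A_1v_n$. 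The only other possibility, namely that $\mathrm{rank}(A_0+\Delta A_0)$ drops to $n-2$, forces $\|\Delta A_0\|_F\geq\sigma_{n-1}$ by a Weyl-type inequality, and since $\sigma_{\min}(X)\leq\sigma_{n-1}$ and $\sigma_{\min}(Y)\leq\sigma_{n-1}$ (read off the $(n-1)$st row of $X$ and the $(n-1)$st column of $Y$), this case never beats the other two. Taking the smaller of $\sigma_{\min}(X)$ and $\sigma_{\min}(Y)$ then gives the claimed value, with the matching upper bound realized by explicit rank-one $\Delta A_0,\Delta A_1$ built from a right and left singular pair of $X$ (respectively $Y$) associated with its smallest singular value, together with $\Delta A_i=0$ for $i\geq 2$.

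The step that requires genuine care is the lower bound, i.e.\ showing that for an arbitrary admissible $\Delta P$ one has $\normp{\Delta P}_F\geq\min\{\sigma_{\min}(X),\sigma_{\min}(Y)\}$, equivalently that it suffices to optimize over the two structured families above. The point is that a rank-at-most-one perturbation of $A_0$ can move only one of the two one-dimensional null spaces of $A_0$ without cost, and the exact trade-off between the size of the perturbation needed to move a null direction and the size of the accompanying perturbation of $A_1$ needed to close the Jordan chain is encoded by the bordered matrices $X$ and $Y$. This is the content of the case analysis in \cite[Theorem 5.4]{KotDBB19}, which transfers to the matrix polynomial setting verbatim after the reduction in the first paragraph; I expect this case analysis, rather than the minimum-norm computation, to be the main obstacle.
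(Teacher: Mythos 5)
Your proposal is correct and follows the same route as the paper, whose proof is simply the assertion that it is identical to that of \cite[Theorem 5.4]{KotDBB19}. You usefully spell out the reduction from the degree-$k$ polynomial to the pencil $\lambda A_1 + A_0$ (setting $\Delta A_i = 0$ for $i \geq 2$ is admissible under the rank restriction, only decreases $\normp{\Delta P}_F$, and does not affect the length-two Jordan-chain conditions at $0$) and the unitary normalization $A_0 \mapsto \Sigma$, and your sketch of the pencil-case argument — in particular the observation that a rank-one $\Delta A_0$ making $A_0 + \Delta A_0$ singular must preserve either the left or the right null direction of $A_0$, with the bordered matrices $X$, $Y$ encoding the resulting minimum-norm trade-off via \cite[Lemma 1.3]{Sun93} — is accurate.
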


\section{Numerical Experiments}

This section presents numerical experiments conducted to illustrate the upper and lower bounds on the distances and their values computed  via BFGS and MATLAB's {\tt globalsearch} algorithm from the formulation in \cref{form_delta_F}.
Computing $\delta_F(P,\lambda_0,r+1)$ from the optimization in \cref{form_delta_F} via BFGS requires the gradient of the objective function $f(X):=\norm{HX(M(\lambda_0;r)X)^\dagger}_F$ where $X$ varies depending on whether $r\leqslant k$ or $r>k$ and $H = \begin{bmatrix} P(\lambda_0) & \cdots & \frac{1}{p!}P^p(\lambda_0) \end{bmatrix},$ $p=\min\{r,k\}.$ By \cref{z_to_nz},
$H = \begin{bmatrix} A_0 & \cdots & A_k \end{bmatrix}M(\lambda_0;r).$
Therefore $$f(X) = \norm{G(M(\lambda_0;r)X)(M(\lambda_0;r)X)^\dagger}_F$$ where $G = \begin{bmatrix} A_0 & \cdots & A_k \end{bmatrix}.$ Only real matrix polynomials are considered in the experiments. Since $M(\lambda_0;r)$ has full column rank, for any $X = X_0$ if there exists a neighborhood $S$ of $X_0$ such that $\mathrm{rank} X_0=\mathrm{rank} X$ for all $X\in S$ then $f(X)$ is differentiable at $X_0$. If we use any numerical scheme to find the infimum of $f(X),$ then generically at every step there exists a neighborhood $S$ of $X$ where every element of $S$ is of full rank and consequently we can find gradient of $f(X)$ at those points. Additionally the matrix $X$ involved in the objective function $f(X)$ has block Toeplitz structure which needs to be incorporated when finding the gradient of $f(X)$. For simplicity, the gradient is initially computed for the function $(f(X))^2$ without taking the structure of $X$ into consideration with the changes due to the structure being incorporated later. Therefore the function under consideration is
$$g(X):=(f(X))^2=\norm{G(M(\lambda_0;r)X)(M(\lambda_0;r)X)^{\dagger}}_F^2.$$
Considering $g(X)$ as a real valued function of the entries of $X,$ $$\nabla g(X)\Big |_{X=X_0}=\mathrm{vec}\left(\frac{dg}{dX}\Big |_{X=X_0}\right).$$ Now, setting $Y = M(\lambda_0;r)X,$
$$dg = 2\inner{GYY^{\dagger}}{Gd(YY^{\dagger})} \text{ where } \inner{A}{B} = \mathrm{trace} A^TB.$$
Expanding the right hand side gives \begin{equation}\label{grad_cal1} dg = 2\inner{G^TGYY^{\dagger}(Y^{\dagger})^T}{dY}+2\inner{Y^TG^TGYY^{\dagger}}{dY^{\dagger}} \end{equation}
where,
$$dY^{\dagger}=(I-Y^{\dagger}Y)dY^T(Y^{\dagger})^T(Y^{\dagger})+(Y^{\dagger})(Y^{\dagger})^TdY^T(I-YY^{\dagger})-(Y^{\dagger})dY(Y^{\dagger}).$$
Therefore,
\begin{align*}
dg= &2\inner{G^TGYY^{\dagger}(Y^{\dagger})^T}{dY}+2\inner{Y^TG^TGYY^{\dagger}}{(I-Y^{\dagger}Y)dY^T(Y^{\dagger})^T(Y^{\dagger})}\\
&+2\inner{Y^TG^TGYY^{\dagger}}{(Y^{\dagger})(Y^{\dagger})^TdY^T(I-YY^{\dagger})}-2\inner{Y^TG^TGYY^{\dagger}}{(Y^{\dagger})dY(Y^{\dagger})}\\
=&2\inner{G^TGYY^{\dagger}(Y^{\dagger})^T}{dY}+2\inner{(I-Y^{\dagger}Y)^TY^TG^TGYY^{\dagger}(Y^{\dagger})^T(Y^{\dagger})}{dY^T}\\
&+2\inner{(Y^{\dagger})(Y^{\dagger})^TY^TG^TGYY^{\dagger}(I-YY^{\dagger})^T}{dY^T}\\
&-2\inner{(Y^{\dagger})^TY^TG^TGYY^{\dagger}(Y^{\dagger})^T}{dY}\\
=&2\inner{\mathcal{F}(G,Y)}{dY}
\end{align*}
where
\begin{align*}
\mathcal{F}(G,Y) := & G^TGYY^{\dagger}(Y^{\dagger})^T+(Y^{\dagger})^TY^{\dagger}(Y^{\dagger})^TY^TG^TGY(I-Y^{\dagger}Y)\\
& +(I-YY^{\dagger})(Y^{\dagger})^TY^TG^TGYY^{\dagger}(Y^{\dagger})^T-(Y^{\dagger})^TY^TG^TGYY^{\dagger}(Y^{\dagger})^T.
\end{align*}
As, $dY = M(\lambda_0;r)X,$ $\frac{dg}{dX} = 2M(\lambda_0;r)^T\mathcal{F}(G,Y)=:\psi(X).$
Now at a fixed $X_0$, $\frac{dg}{dX}\Big |_{X=X_0}=\psi(X_0).$ Due to the structure of $X,$ $\nabla g(X)\Big |_{X=X_0}$ is given by
$$\scalemath{0.85}{\nabla g(X)\Big |_{X=X_0}=
	\begin{bmatrix}\sum_{i=1}^{r+1}\left(\begin{bmatrix} \psi(X_0)_{(i-1)n+1,i} \\ \vdots \\ \psi(X_0)_{in,i} \end{bmatrix}\right) \\
	\sum_{i=1}^{r}\left(\begin{bmatrix} \psi(X_0)_{(i-1)n+1,i+1} \\ \vdots \\ \psi(X_0)_{in,i+1} \end{bmatrix}\right) \\ \vdots \\ \begin{bmatrix} \psi(X_0)_{1,r+1} \\ \vdots \\ \psi(X_0)_{n,r+1} \end{bmatrix} \end{bmatrix} \text{ if } r\leqslant k,}$$
and by
$$\scalemath{0.85}{\nabla g(X)\Big |_{X=X_0}=\begin{bmatrix}\sum_{i=1}^{k+1} \left(\begin{bmatrix} \psi(X_0)_{(i-1)n+1,i} \\ \vdots \\ \psi(X_0)_{in,i} \end{bmatrix}\right) \\ \vdots \\ \sum_{i=1}^{k+1} \left(\begin{bmatrix} \psi(X_0)_{(i-1)n+1,i+r-k} \\ \vdots \\ \psi(X_0)_{in,i+r-k} \end{bmatrix}\right) \\ \sum_{i=1}^{k} \left(\begin{bmatrix} \psi(X_0)_{(i-1)n+1,i+r-k+1} \\ \vdots \\ \psi(X_0)_{in,i+r-k+1} \end{bmatrix}\right) \\ \vdots\\ \begin{bmatrix} \psi(X_0)_{1,r+1} \\ \vdots \\ \psi(X_0)_{n,r+1} \end{bmatrix} \end{bmatrix} \text{ if }  r>k.}$$

Due to the difficulties in computing the gradient of the objective function, the optimization for $\delta_2(P,\lambda_0,r+1)$ in \cref{form_delta_F} is performed only via MATLAB's {\tt globalsearch.m}. Also in each case, the optimizations involved in the lower and upper bounds are computed via {\tt globalsearch.m} algorithm.

\begin{example}\label{ex1}
	\rm{Consider a $2 \times 2$ matrix polynomial of degree $3$, $$\scalemath{0.85}{P(\lambda)=\begin{bmatrix}-.1414&-.1490\\1.1928&.9702\end{bmatrix}+\lambda \begin{bmatrix}.8837&.9969\\.2190&.0259\end{bmatrix}+\lambda^2\begin{bmatrix}.6346&.9689\\.6252&-.0649\end{bmatrix}+\lambda^3\begin{bmatrix}-1.9867&1.2800
		\\.6097&-.1477\end{bmatrix}}.$$
	\cref{tb11} records the the values of the distance $\delta_F(P,0,r)$ computed via {\tt globalsearch.m} and BFGS algorithms using the formulation in \cref{form_delta_F} for various values of $r$ together with lower bounds from \cref{lb1} and \cref{lb2} and the upper bound from \cref{ubound}. \cref{tb12} records the same for  $\delta_F(P,1,r)$ as $r$ varies from $2$ to $6.$
	Likewise,  \cref{tb13} and \cref{tb14} records the corresponding quantities for the distances $\delta_2(P,0,r)$ and $\delta_2(P,1,r)$ respectively, except that in these cases the distance is computed only via the {\tt globalsearch.m} algorithm.}
	
	\begin{table}[h!]
		\begin{center}
			\begin{tabular}{|c|c|c|c|c|c|}
				\hline
				\rotanti{Distance} \rotanti{measured} & \rotanti{Lower bound}~  \rotanti{(\cref{lb2})}  & \rotanti{Lower bound} ~\rotanti{(\cref{lb1})}   & \rotanti{BFGS} & \rotanti{{\tt globalsearch}} & \rotanti{Upper bound}~ \rotanti{(\cref{ubound})}\\ \hline
				$\delta_F(P,0,2)$ &  0.10797922 &  0.10683102    & 0.14992951  & 0.14992951         & 0.1504944      \\ \hline
				$\delta_F(P,0,3)$ & 0.17943541 &  0.17354340    & 0.27433442  & 0.27433442         & 0.27996519     \\ \hline
				$\delta_F(P,0,4)$ & 0.83444419 &  0.65889251    & 1.41424988 &  1.41424988       & 1.4189444     \\ \hline
				$\delta_F(P,0,5)$ & 0.90827444 &  0.75348431    & 1.46326471 &  1.46326471       & 1.47185479     \\ \hline
				$\delta_F(P,0,6)$ & 0.99263034 &  0.85789363   & 1.66359899 &  1.66359899       & 1.72452708     \\
				\hline
			\end{tabular}
			\caption{\label{tb11}Comparison of upper and lower bounds with the distance $\delta_F(P,0,r)$ calculated by BFGS and {\tt globalsearch.m} for \cref{ex1}.}
		\end{center}
	\end{table}

	\begin{table}[h!]
		\begin{center}
			\begin{tabular}{|c|c|c|c|c|c|}
				\hline
				\rotanti{Distance} \rotanti{measured} & \rotanti{Lower bound}~  \rotanti{(\cref{lb2})}  & \rotanti{Lower bound}~ \rotanti{(\cref{lb1})}   & \rotanti{BFGS} & \rotanti{{\tt globalsearch}} & \rotanti{Upper bound}~ \rotanti{(\cref{ubound})}\\ \hline
				$\delta_F(P,1,2)$ & 1.35798224 & 0.70551994   &1.35814780 &1.35814780  & 1.39370758     \\ \hline
				$\delta_F(P,1,3)$ & 1.35690676 & 0.57675049   &1.42078740 &1.42078740  &1.57015806     \\ \hline
				$\delta_F(P,1,4)$ & 1.35798160 & 0.56881053   &1.42220397 &1.42220397  &1.76028594     \\ \hline
				$\delta_F(P,1,5)$ & 1.35689708 & 0.56908887   &1.45865399 &1.45865399  &1.82967789      \\ \hline
				$\delta_F(P,1,6)$ & 1.35690633 & 0.56789237   &1.46349849 &1.46349849  &1.57008146     \\
				\hline
			\end{tabular}
			\caption{\label{tb12}Comparison of upper and lower bounds with the distance $\delta_F(P,1,r)$ calculated by BFGS and {\tt globalsearch.m} for \cref{ex1}.}
		\end{center}
	\end{table}
	
	\begin{table}[h!]
		\begin{center}
			\begin{tabular}{|c|c|c|c|c|c|}
				\hline
				\rotanti{Distance} \rotanti{measured}& \rotanti{Lower bound} ~ \rotanti{(\cref{lb2})}  & \rotanti{Lower bound}  ~\rotanti{(\cref{lb1})}  & \rotanti{{\tt globalsearch}} & \rotanti{Upper bound}~ \rotanti{(\cref{ubound})}\\ \hline
				$\delta_2(P,0,2)$ &  0.10797922 &  0.10683102    &.10797922  & 0.11368413       \\ \hline
				$\delta_2(P,0,3)$ & 0.17943541 &  0.17354340    &.19516063 & 0.21687613    \\ \hline
				$\delta_2(P,0,4)$ & 0.83444419 &  0.65889251    &1.04436762 & 1.05968598      \\ \hline
				$\delta_2(P,0,5)$ & 0.90827444 &  0.75348431    &1.13265970 & 1.20943709        \\ \hline
				$\delta_2(P,0,6)$ & 0.99263034 &  0.85789363    &1.55726928 & 1.70199290       \\
				\hline
			\end{tabular}
			\caption{\label{tb13}Comparison of upper and lower bounds with the distance $\delta_2(P,0,r)$ calculated by {\tt globalsearch.m} for \cref{ex1}.}
		\end{center}
	\end{table}

	\begin{table}[h!]
		\begin{center}
			\begin{tabular}{|c|c|c|c|c|c|}
				\hline
				\rotanti{Distance} \rotanti{measured} & \rotanti{Lower bound} ~\rotanti{(\cref{lb2})}  & \rotanti{Lower bound} ~\rotanti{(\cref{lb1})}   & \rotanti{{\tt globalsearch}} & \rotanti{Upper bound} ~\rotanti{(\cref{ubound})}\\ \hline
				$\delta_2(P,1,2)$ & 1.35798224 & 0.70551994   &1.35798224 & 1.35827634     \\ \hline
				$\delta_2(P,1,3)$ & 1.35690676 & 0.57675049   &1.35805109 & 1.35813196    \\ \hline
				$\delta_2(P,1,4)$ & 1.35798160 & 0.56881053   &1.35805159 & 1.56108421      \\ \hline
				$\delta_2(P,1,5)$ & 1.35689708 & 0.56908887   &1.35805160 & 1.52575381    \\ \hline
				$\delta_2(P,1,6)$ & 1.35690633 & 0.56789237   &1.416503376 & 1.43921050      \\
				\hline
			\end{tabular}
			\caption{\label{tb14}Comparison of upper and lower bounds with the distance $\delta_2(P,1,r)$ calculated by {\tt globalsearch.m} for \cref{ex1}.}
		\end{center}
	\end{table}
\end{example}

\begin{example}\label{ex2}
\rm{	Consider the matrix polynomial
	$$\scalemath{0.76}{P(\lambda)=\begin{bmatrix}2.7694 & 0.7254 & -0.2050\\-1.3499 & -0.0631 & -0.1241\\3.0349&.7147&1.4897\end{bmatrix}+
		\lambda\begin{bmatrix}1.4090&-1.2075&0.4889\\1.4172&0.7172&1.0347\\0.6715&1.6302& 0.7269\end{bmatrix}+
		\lambda^2\begin{bmatrix}-0.3034 & 0.8884 & -0.8095\\ 0.2939&-1.1471&-2.9443\\-0.7873&-1.0689&1.4384\end{bmatrix}}.$$
	\cref{tb21} and \cref{tb22} record the computed values of the distances $\delta_F(P,0,r)$ and $\delta_F(P,-1,r)$ respectively obtained via BFGS and {\tt globalsearch.m} algorithms for all possible values of $r$ together with the upper and lower bounds.
	The corresponding quantities for the distances $\delta_2(P,0,r)$ and $\delta_2(P,-1,r)$ are recorded in \cref{tb23} and \cref{tb24} respectively except that in these cases the computed value of the distance is obtained only via the {\tt globalsearch.m} algorithm.}
	
	\begin{table}[h!]
		\begin{center}
			\begin{tabular}{|c|c|c|c|c|c|}
				\hline
				\rotanti{Distance} \rotanti{measured} & \rotanti{Lower bound} ~\rotanti{(\cref{lb2})}  & \rotanti{Lower bound} ~\rotanti{(\cref{lb1})}   & \rotanti{BFGS} & \rotanti{{\tt globalsearch}} & \rotanti{Upper bound} ~\rotanti{(\cref{ubound})}\\ \hline
				$\delta_F(P,0,2)$ & 0.25800277& 0.25750097    & 0.25904415  & 0.25904415         & 0.268796      \\ \hline
				$\delta_F(P,0,3)$ & 0.43621850& 0.38556596    & 0.69617957  & 0.69617957         & 0.82200773     \\ \hline
				$\delta_F(P,0,4)$ & 0.88752500& 0.83727454    & 1.84231345 &  1.84231345       & 2.04437686     \\ \hline
				$\delta_F(P,0,5)$ & 1.19949290& 1.13421484    & 1.84468801 &  1.84468801       & 2.43953618     \\ \hline
				$\delta_F(P,0,6)$ & 1.28885600& 1.07999296    & 2.60665217 &  2.60665222       & 2.76918876     \\
				\hline
			\end{tabular}
			\caption{\label{tb21}Comparison of upper and lower bounds with the distance $\delta_F(P,0,r)$ calculated by BFGS and {\tt globalsearch.m} for \cref{ex2}.}
		\end{center}
	\end{table}
	
	\begin{table}[h!]
		\begin{center}
			\begin{tabular}{|c|c|c|c|c|c|}
				\hline
				\rotanti{Distance} \rotanti{measured} & \rotanti{Lower bound} ~\rotanti{(\cref{lb2})}  & \rotanti{Lower bound} ~\rotanti{(\cref{lb1})}   & \rotanti{BFGS} & \rotanti{{\tt globalsearch}} & \rotanti{Upper bound} ~\rotanti{(\cref{ubound})}\\ \hline
				$\delta_F(P,-1,2)$& 0.99413714  & 0.49049043 & 1.14436402 &1.14436402  &1.14869786   \\ \hline
				$\delta_F(P,-1,3)$& 1.23816383  & 0.57712979 & 2.22703947 &2.22703947  &2.37565159     \\ \hline
				$\delta_F(P,-1,4)$& 1.33820455  & 0.56416354 & 2.33112163 &2.33112163  &2.51177974     \\ \hline
				$\delta_F(P,-1,5)$& 1.36050277  & 0.59682624 & 2.44152499 &2.44152500  &2.89719526      \\ \hline
				$\delta_F(P,-1,6)$& 1.46702487  & 0.61024547 & 2.62503371 &2.64810973  &2.93776340     \\
				\hline
			\end{tabular}
			\caption{\label{tb22}Comparison of upper and lower bounds with the distance $\delta_F(P,-1,r)$ calculated by BFGS and {\tt globalsearch.m} for \cref{ex2}.}
		\end{center}
	\end{table}
	
	\begin{table}[h!]
		\begin{center}
			\begin{tabular}{|c|c|c|c|c|c|}
				\hline
				\rotanti{Distance} \rotanti{measured} & \rotanti{Lower bound} ~\rotanti{(\cref{lb2})}  & \rotanti{Lower bound} ~\rotanti{(\cref{lb1})} & \rotanti{{\tt globalsearch}} & \rotanti{Upper bound} ~\rotanti{(\cref{ubound})}\\ \hline
				$\delta_2(P,0,2)$ & 0.25800277& 0.25750097    & 0.25802766  & 0.2581792          \\ \hline
				$\delta_2(P,0,3)$ & 0.43621850& 0.38556596    & 0.47215137  & 0.58937606  \\ \hline
				$\delta_2(P,0,4)$ & 0.88752500& 0.83727454    &1.11581440 & 1.57310992    \\ \hline
				$\delta_2(P,0,5)$ & 1.19949290& 1.13421484    &1.49604879 & 1.83989133     \\ \hline
				$\delta_2(P,0,6)$ & 1.28885600& 1.07999296    &1.90820166 & 2.39309442    \\
				\hline
			\end{tabular}
			\caption{\label{tb23}Comparison of upper and lower bounds with the distance $\delta_2(P,0,r)$ calculated by {\tt globalsearch.m} for \cref{ex2}.}
		\end{center}
	\end{table}
	
	\begin{table}[h!]
		\begin{center}
			\begin{tabular}{|c|c|c|c|c|c|}
				\hline
				\rotanti{Distance} \rotanti{measured} & \rotanti{Lower bound} ~\rotanti{(\cref{lb2})}  & \rotanti{Lower bound} ~\rotanti{(\cref{lb1})}   & \rotanti{{\tt globalsearch}} & \rotanti{Upper bound} ~\rotanti{(\cref{ubound})}\\ \hline
				$\delta_2(P,-1,2)$& 0.99413714  & 0.49049043 &0.99413892  & 1.08915666   \\ \hline
				$\delta_2(P,-1,3)$& 1.23816383  & 0.57712979 &1.44794214 & 1.95311420    \\ \hline
				$\delta_2(P,-1,4)$& 1.33820455  & 0.56416354 &1.49553573 & 1.92278887    \\ \hline
				$\delta_2(P,-1,5)$& 1.36050277  & 0.59682624 &1.70157792  & 2.04844570      \\ \hline
				$\delta_2(P,-1,6)$& 1.46702487  & 0.61024547 &2.19715515 & 2.64000204     \\
				\hline
			\end{tabular}
			\caption{\label{tb24}Comparison of upper and lower bounds with the distance $\delta_2(P,-1,r)$ calculated by {\tt globalsearch.m} for \cref{ex2}.}
		\end{center}
	\end{table}
\end{example}

In almost every case the lower bound from \cref{lb2} is better than the lower bound from \cref{lb1}. The perturbations $\Delta P(\lambda)$ constructed to find the upper bound in \cref{ubound} may also be obtained by using nonzero singular values of $T_\gamma(P,\lambda_0)$ other than $f(\gamma)$ and a corresponding pair of left and right singular vectors. However the resulting upper bound obtained by taking the infimum of $\normp{\Delta P}_s$, $s=2$ or $F$ over all permissible $\gamma$ does not seem to be an improvement over the one already obtained. For instance in \cref{ex1}, the matrix $T_\gamma(P,0)$ corresponding to the distance $\delta_2(P,0,4)$ is of size $8$ and the upper bound from \cref{ubound} reported in \cref{tb13} is constructed by using $\sigma_5(T_\gamma(P,0))$ and it corresponding left and right singular vectors. If the same bound is constructed by considering the three smallest singular values $\sigma_{6}(T_\gamma(P,0)),$ $\sigma_{7}(T_\gamma(P,0))$ and $\sigma_{8}(T_\gamma(P,0))$ and corresponding left and right singular vectors, then the values are $1.55784600,$ $1.65319413$ and $2.42096365$ respectively. Similar observations have been made by considering the other singular value of $T_\gamma(P,0).$

\section{Conclusion} Given a square matrix polynomial $P(\lambda),$ the problem of finding the distance to a nearest matrix polynomial with an elementary divisor of the form $(\lambda- \lambda_0)^j, j \geqslant r,$ for a given $\lambda_0 \in \C$ and $r \geqslant 2$ has been considered. The distance is shown to be zero for singular matrix polynomials. The problem has been characterized in terms of different optimization problems. One of them shows that the solution is the reciprocal of a generalized notion of a $\mu$-value.  The other optimization is used to compute the distance via numerical software like BFGS and MATLAB's {\tt globalsearch}. Upper and lower bounds have been derived from the characterizations and numerical experiments performed to compare them with the computed values of the distance show that they are quite tight in many cases. Since $\mu$-value computation is an NP-hard problem, it is conjectured that the solution of the given distance problem is also NP-hard. The optimizations involved in the calculations are computationally quite expensive. But this is also the case with other optimizations proposed in the literature for computing similar distances. Also due to the nature of the optimizations, it is not clear that the values of the bounds from \cref{lb2} and \cref{ubound} are the globally optimal values. However in many cases they are very close to the computed values of the distance. This leaves the question whether they may actually give the exact solution of the distance problem open for future research.

\bibliographystyle{plain}
\bibliography{Nearest_polynomials_specified_Jordan_chains}
	
\end{document}